\DeclareMathAlphabet{\mathpzc}{OT1}{pzc}{m}{it}
\newtheorem{theorem}{Theorem}[section]
\newtheorem{definition}{Definition}[section]
\newtheorem{lemma}{Lemma}[section]
\newtheorem{corollary}{Corollary}[section]
\newtheorem{assumption}{Assumption}
\newtheorem{remark}{Remark}[section]
\newtheorem{example}{Example}[section]
\newtheorem{proposition}{Proposition}[section]
   \newtheoremstyle{example}{\topsep}{\topsep}%
     {}%         Body font
     {}%         Indent amount (empty = no indent, \parindent = para indent)
     {\bfseries}% Thm head font
     {}%        Punctuation after thm head
     {\newline}%     Space after thm head (\newline = linebreak)
     {\thmname{#1}\thmnumber{ #2}\thmnote{ #3}}%         Thm head spec
   \theoremstyle{example}
\newcommand{\E}{\mathbb{E}}
\newcommand{\F}{\mathcal{F}}
\newcommand{\N}{\mathbb{N}}
\newcommand{\Prob}{\mathbb{P}}
\newcommand{\Poly}{\mathcal{P}}
\newcommand{\calD}{\mathcal{D}}
\newcommand{\calL}{\mathcal{L}}
\newcommand{\calJ}{{\mathcal{J}}}
\newcommand{\R}{\mathbb{R}}
\newcommand{\m}{\mathfrak{m}}
\newcommand{\p}{\mathfrak{p}}
\newcommand{\Nu}{\mathcal{V}}
\newcommand{\Tau}{\mathcal{T}}
\newcommand{\II}{\mathds{I}}
\newcommand{\bL}{\mathbf{L}}
\newcommand{\B}{\mathbf{B}}
\newcommand{\lam}{{\bm{\lambda}_1}}
\newcommand{\opphi}{\partial_t^{\star \phi}}
\newcommand{\opma}{\partial_t^{\star m_{\alpha}}}
\newcommand{\opm}{\partial_t^{\star m}}
\newcommand{\op}{\partial_t^{\star m,b}}
\newcommand{\opPhi}{\partial_t^{\star \Phi}}
\newcommand{\opbPhi}{\partial_t^{\star \pmb{\phi}}}
\newcommand{\Ep}{\mathcal{E}_{\phi_{\alpha}}}
\newcommand{\Ehat}{\widehat{\mathcal{E}}_{\phi_{\alpha}}}
\newcommand{\bPhi}{\pmb{\phi}}
\newcommand{\e}{\mathbf{e}}
\newcommand{\ha}{{\mathcal{H}}_{\gamma}}
\newcommand{\hha}{\widehat{\mathcal{H}}_{\gamma}}
\newcommand{\hatJ}{\widehat{\mathcal{J}}_{\gamma}}
\newcommand{\pa}{{\pi}_{\gamma}}
\newcommand{\DL}{\mathcal{D}_{\bL}}
\newcommand{\BD}{\B_{\partial^\star }}
\numberwithin{equation}{section}
\author{P. Patie}\thanks{The authors are indebted to Mark Meerschaert for providing them many interesting references on the spectral approach in the context of the fractional Cauchy problem and also for his invaluable encouragements. They are also grateful to the referees for careful reading, constructive comments and providing several interesting references on different aspects of the paper.}
\address{School of Operations Research and Information Engineering, Cornell University, Ithaca, NY 14853.}
\email{	pp396@orie.cornell.edu}
\author{A. Srapionyan}
\address{Center for Applied Mathematics, Cornell University, Ithaca, NY 14853.}
\email{	as3348@cornell.edu}
\title{Self-similar Cauchy problems and generalized Mittag-Leffler functions}
\begin{document}
\begin{abstract}
By observing that the  fractional Caputo derivative of order $\alpha \in (0,1)$ can be expressed in terms of a multiplicative convolution operator, we introduce and study a class of such operators which also have the same self-similarity property as the Caputo derivative. We proceed by identifying a subclass which is in bijection with the set of Bernstein functions and we provide several representations of their eigenfunctions, expressed in terms of the corresponding Bernstein function,  that generalize the Mittag-Leffler function.  Each eigenfunction turns out to be the Laplace transform of the right-inverse of a non-decreasing self-similar Markov process associated via the so-called Lamperti mapping to this Bernstein function. Resorting to spectral theoretical arguments, we investigate the generalized Cauchy problems, defined with these self-similar multiplicative convolution operators. In particular, we provide both a stochastic representation, expressed in terms of these inverse processes, and an explicit representation, given in terms of the generalized Mittag-Leffler functions, of the solution of these self-similar  Cauchy problems. This work could be seen as an-in depth analysis of a specific  class, the one with the  self-similarity property, of the general  inverse of increasing Markov processes introduced and studied  in \cite{Kol1}.
%Due to their properties, non-Markovian processes that are obtained by time-changing a Markov process with the inverse of an independent subordinator have found important applications in various fields. As an alternative approach to generating non-Markovian processes, we introduce and study some of the substantial features of a Markov process time-changed with an inverse increasing self-similar Markov process and show that the series expansion of its Laplace transform is expressed in terms of generalized Mittag-Leffler functions. Furthermore, we introduce a self-similar multiplicative convolution generalization of the fractional Caputo derivative and study some of its appealing properties. Naturally, we then consider the corresponding self-similar Cauchy problem and show that its solution has a stochastic representation.
\end{abstract}

\maketitle

%$\vspace{5pt}$
%\\

\textbf{AMS 2010 subject classifications:} Primary: 60G18. Secondary: 42A38, 33E50.
%%%%%%%%%%%%%%%%%%%%%%%%%%%%%%%%%%%%%%%%%%%%%%%%%%%%%%%%%%%%%%%%

%%%%%%%%%%%%%%%%%%%%%%%%%%%%%%%%%%%%%%%%%%%%%%%%%%%%%%%%%%%%%%%%
%$\vspace{2pt}$
%\\
\textbf{Key words:} Fractional derivatives,  Self-similar processes,  Mittag-Leffler functions, Bernstein functions, Self-similar Cauchy problem, Spectral theory.

\section{Introduction} \label{sec:intro}
The fractional Caputo derivative of order $\alpha \in (0,1)$ which is usually defined in terms of the additive convolution operator $*$  and the function $h_{\alpha}(y)= \frac{y^{-\alpha}}{\Gamma(1-\alpha)}, y>0,$ as follows
\begin{equation}\label{Caputo}
\frac{~^Cd^{\alpha}}{dt^{\alpha}}f(t) = f'*h_{\alpha}(t)=\frac{1}{\Gamma(1-\alpha)}\int_0^t \frac{f'(y)}{(t-y)^{\alpha}}dy
\end{equation}
plays a central and growing role in various contexts, see e.g.~the monographs \cite{Maina, meerschaert_sikorskii, Book-Fract, fractional_applications}. In particular, in analysis, it  appears in the fractional Cauchy problem, where one replaces the derivative of order $1$ by the fractional one, i.e.~$\frac{~^C d^{\alpha}}{dt^{\alpha}}f = \mathbf L f$, with  $\mathbf L$ the infinitesimal generator of a strong Markov process $X$, see \cite{Zasla} for the introduction of this problem in relation to some Hamiltonian chaotic dynamics of particles given in terms of stable processes.

Bauemer and Meerschaert in \cite{baeumer_meerschaert} showed the intriguing fact that the solution of this problem admits a stochastic representation which is given in terms of a non-Markovian process  defined as the Markov process $X$ time-changed by the inverse of an $\alpha$-stable subordinator. This offers another fascinating connection between stochastic and functional analysis. Observing that the mapping $h_{\alpha}$ is the tail of the L\'evy measure of this stable subordinator, it is then natural to generalize the fractional operator as an additive convolution operator by replacing the function $h_{\alpha}$ with the tail of the L\'evy measure of any subordinator.  It turns out that this interesting program has been developed recently by Toaldo \cite{toaldo} and the corresponding generalized fractional Cauchy problem has, when this tail has infinite mass, a similar stochastic representation where the time-changed process is the inverse of the subordinator, see \cite{toaldo,chen2017time}, and, see the recent paper \cite{MB} where the independency assumption is relaxed.

Another important feature of the fractional Caputo derivative is its self-similarity property
\begin{equation} \label{eq:def_scaling}
\frac{~^C d^{\alpha}}{dt^{\alpha}} d_c f(t) = c^{\alpha} \frac{~^C d^{\alpha}}{dt^{\alpha}}f(ct), \quad c,t>0,
\end{equation}
where $d_cf(t)=f(ct)$ is the dilation operator. It is not difficult to convince yourself that this property follows from the homogeneity property of the function $h_{\alpha}$ which itself is inherited from  the scaling property of the $\alpha$-stable subordinator, and thus it does not hold for any $*$-convolution operators associated to any other subordinators. This property is appealing from a modelling viewpoint as it has been observed in many physical and economics phenomena \cite{Embrechts-Maejima}  and is also central  in (non-trivial) limit theorems for any properly normalized stochastic processes, see \cite{lamperti1962semi}. Two questions then arise naturally:
\begin{enumerate}
  \item \label{Q1} Can one define a class of linear operators enjoying the same self-similarity property \eqref{eq:def_scaling} as the fractional derivative?
  \item \label{Q2} If yes, can one find a stochastic representation for the solution of the corresponding self-similar Cauchy problem?
\end{enumerate}
The aim of this paper is to provide a positive and detailed answer to each of these questions.
For \eqref{Q1}, we observe that the fractional derivative~\eqref{Caputo} admits also the  representation as a multiplicative convolution operator
\begin{equation}\label{eq:Cap}
f'*h_{\alpha}(t)=\frac{t^{-\alpha}}{\Gamma(1-\alpha)}\int_0^t f'(y)\left(1-\frac{y}{t}\right)^{-\alpha}dy =t^{-\alpha} f' \star g_{\alpha}(t)
\end{equation}
where $g_{\alpha}(r)=\frac{(1-r)^{-\alpha}}{\Gamma(1-\alpha)} \mathbb{I}_{\{0<r<1\}}$ and $\star$ stands for the multiplicative convolution operator defined for two functions $f$ and $g$, whose domain is  a subset of $\R^+$, by \[ f \star g(t) = \int_0^{\infty} f(r) g\left( \frac{r}{t} \right)dr.\]
Note that it differs from the Mellin convolution operator which is defined with respect to the measure $dr/r$.
 It is not difficult to show that the self-similarity property \eqref{eq:def_scaling} holds for any $\star$-convolution operator of the form  \eqref{eq:Cap} by replacing $g_\alpha$ by any measurable function $m$ on $(0,1)$. The answer to the question \eqref{Q2} is more subtle. Indeed, we first realize that the mapping $y\mapsto g_{\alpha}(e^{-y})$ is the tail of the L\'evy measure on $\R_+$ of  a subordinator. We manage to identify this subordinator as the L\'evy process which is associated, via the Lamperti transform defined in \eqref{eq:def_lamp} below, to the stable subordinator seen as an increasing positive self-similar Markov process. We then show, by a spectral theoretical approach, that the functional of a Markov process $X$  time-changed by the inverse of any increasing positive self-similar Markov process $\chi$ is the solution to a self-similar Cauchy problem, where  the multiplicative convolution is defined in terms of simple transform of the tail of the L\'evy measure of the subordinator associated, via the Lamperti mapping, to $\chi$. We mention that such a time-change has already been used  in Loeffen and al.~\cite{semiMarkov_LPS} to provide detailed distributional properties of the extinction time of some real-valued non-Markovian self-similar  processes. We also point out that such a time-change falls in the framework developed by Hernandez-Hernandez et al.~\cite{Kol}  and  Kolokoltsov \cite{Kol1,Kol0,Kol2} in a series of papers.

% $f \star g(t) = \int_0^{t} f(y)g(yt^{-1})dy$.
 %which enables to consider a generalized fractional Cauchy problem. Then, the stochastic representation of the solutions to these Cauchy problems are given in terms of inverse stable subordinators. Some interesting investigations about the relations between the time-changed Markov processes with an inverse stable subordinator and generalized fractional derivatives have been carried out in ~
 %\cite{toaldo}, \cite{chen2017time} and ~.

Another interesting aspect of the multiplicative convolution approach is that it leads to some explicit representations of quantities of interest. For instance, we shall show that the Laplace transform of the inverse process, in this context, is expressed in terms of functions whose series representation is a generalization of the Mittag-Leffler function. As this latter for the Caputo fractional operator, these functions are  also eigenfunctions of the multiplicative convolution operators.

We now recall that due to the non-locality of fractional derivatives and integrals, fractional models provide a powerful tool for a description of memory and hereditary properties of different substances, see e.g.~Liu et al.~\cite{liu2015numerical} and Podlubny \cite{podlubny1999fractional}. Equations of fractional order appear in a lot of physical phenomena, see e.g.~Meerschaert and Sikorskii~\cite{meerschaert_sikorskii}, and in particular for modeling anomalous diffusions, see e.g.~Benson et al.~\cite{benson2001fractional}, D'Ovidio~\cite{d2012sturm} and Mainardi's monograph \cite{Maina}.
Fractional calculus, which defines and studies derivatives and integrals of fractional order, has been applied in various areas of engineering, science, finance, applied mathematics, and bio engineering.
The fractional Cauchy problems replace the integer time derivative by its fractional counterpart, i.e.~ $\frac{~^C d^{\alpha}}{dt^{\alpha}}f = \mathbf L f$. The connection between fractional Cauchy problems and the inverse of a stable subordinator was explored by many authors, see e.g.~ Baeumer and Meerschaert~\cite{baeumer_meerschaert, BaMe}, Meerschaert et~al.~\cite{meerschaert2009fractional}, Saichev and Zaslavsky~\cite{saichev1997fractional}, Zaslavsky~\cite{zaslavsky2002chaos} and Capitanelli and D'Ovidio \cite{Cap}, among others. 

The rest of this paper is organized as follows. In Section~\ref{sec:self-sim}, we study some of the substantial properties of the inverse of an increasing self-similar Markov process. In Section~\ref{sec:frac_op}, we introduce a self-similar multiplicative convolution generalization of the fractional Caputo derivative. In Section~\ref{sec:frac_Cauchy}, we study the corresponding self-similar Cauchy problem and provide the stochastic representation of its solution. Finally, to illustrate some examples, Section~\ref{sec:Examples} considers families of some self-adjoint, as well as, some non-local and non-self-adjoint Markov semigroups.

\section{Inverse of increasing self-similar Markov processes}\label{sec:self-sim}
Let $\chi=(\chi_t)_{t\geq 0}$ be  a non-decreasing self-similar Markov process of index $\alpha \in (0,1)$ issued from $0$ and denote by $\zeta=(\zeta_t)_{t\geq 0}$ its right-inverse, that is, for any $t\geq 0$,
\begin{equation}\label{eq:def_inv}
 \zeta_t=\inf \{s>0;\: \chi_s>t \}.
\end{equation}
 Denoting the law of the process by $\Prob_x$ when starting from $x>0$, we say that  a stochastic process $\chi$ is self-similar of index $\alpha$ (or $\alpha$-self-similar) if the following identity
\begin{equation}\label{eq:defselfsim}
  (c\chi_{c^{-\alpha}t},\Prob_x)_{t\geq 0} \stackrel{d}{=} (\chi_t,\Prob_{cx})_{t\geq 0}
\end{equation}
holds in the sense of finite-dimensional distribution for any $c>0$.
Now, we recall that Lamperti~\cite{lamperti1972semi} identifies a one-to-one mapping between the class of positive self-similar Markov processes and the one of L\'evy processes. In particular, one has, under $\Prob_x, x>0$, that
\begin{equation}\label{eq:def_lamp}
  \chi_t = x\exp\left(\Tau_{A_{x^{-\alpha}t}}\right),\quad t\geq 0,
\end{equation}
where $A_t = \inf\{s>0;\: \int_{0}^{s}\exp(\alpha \Tau_r)dr>t\}$. Here $\Tau$ is a subordinator, that is a non-decreasing stochastic process with stationary and independent increments and c\`adl\`ag sample paths, and thus its law is characterized by the Bernstein function $\phi(u)= -\log \E[e^{-u \Tau_1}], u\geq 0$, which in this case, for sake of convenience in the later discussion, is expressed for any $ u\geq0$, as
%\begin{equation}\label{LKs}
%\phi(u)=bu + u \int_1^{\infty}\ r^{-u-1}m(r)dr= bu + u \int_0^{\infty}\ e^{-uy} m(e^{y})dy,
%\end{equation}
\begin{equation}\label{LKs}
\phi(u)=bu + u \int_0^{1}\ r^{u-1}m(r)dr= bu + u \int_0^{\infty}\ e^{-uy} m(e^{-y})dy
\end{equation}
%\begin{equation}\label{LKs}
%\phi(u)=\textcolor{blue}{=bu + u \int_0^{1}\ r^{u+1}m(r)dr}= \textcolor{blue}{=bu + u \int_0^{\infty}\ e^{-uy} m(e^{-y})dy}=  b u+\int_0^\infty(1-e^{-u y})\vartheta(dy) = bu + u \int_0^{\infty}\ e^{-uy}\ \overline{\vartheta}(y)dy\textcolor{blue}{=bu + u \int_0^{1}\ r^{u+1}m(r)dr},
%\end{equation}
%\begin{equation}\label{LKs}
%\phi(u)=  d u   + \int_{0}^{\infty} (1 - e^{-uy}) \vartheta(dy),
%\end{equation}
where  $b \geq 0$ and $r \mapsto m(r)$ is a non-decreasing function on $(0,1)$ and $\int_0^1 (-\ln r \wedge 1)rm(dr)<+\infty$, where $m(r)=\int_0^r m(ds)$, $r \in (0,1)$. Note that under this condition, the mapping $y \mapsto m(e^{-y})$ defined on $\R_+$, is the tail of a L\'evy measure of a subordinator. We also mention that $\phi$ as a Bernstein function is infinitely continuously differentiable and its derivative $\phi'$ is completely monotone, i.e.~ for all  $n\geq 0$, $(-1)^n\frac{d^n}{du^n}\phi(u)\geq0, u>0$, and refer to the monograph \cite{SchillingSongVondracek10} for a thorough account on this set of functions.   Furthermore, to ensure that $\chi$ can be started from $0$, we  assume further that
% $m(e^{-y})$ is the tail of a L\'evy measure, i.e.~the mapping $y \mapsto m(e^{-y})$ is non-increasing on $\R_+$ with $\int_0^1 (-\ln y \wedge 1)dm(y)<\infty$, and it is such that
%$\int_{0}^{\infty} y m(dy)<+\infty$.
%$\vartheta$ is a L\'evy measure such that $\int_{0}^{\infty} y \vartheta(dy)<+\infty$, and $\overline{\vartheta}(y)=\int_{y}^{\infty}\vartheta(dy)$, $y \geq 0$ is its tail.
%Note that the last condition on $m$ implies that
\begin{equation}\label{eq:mean}
 \E[\Tau_1] = \phi'(0^+) =b+ \int_{0}^{1}\frac{m(r)}{r}dr<+\infty
\end{equation}
see \cite[Theorem 1]{Bertoin-Cab}.
%and we write \textcolor{blue}{why this is here?} $m(y)=\int_0^{y}m(dr)$, $0 \le y \le 1$.
Then, we denote the set of Bernstein functions that satisfy this condition by
\begin{equation*}
\B = \{ \phi \text{ of the form} ~\eqref{LKs} \text{ such that } \phi'(0^+)<+\infty\}.
\end{equation*}
We shall also need, for any $\phi \in \B$, the constant
\begin{equation}\label{eq:def_a}
  \mathfrak{a}_{\phi}=  \sup \{u \leq 0; |\phi(u)|=\infty\} \in (-\infty,0].
\end{equation}
Note that by \cite[Theorem 25.17]{sato1999levy} and after performing an integration by parts, we have that $\int_0^{A}\ r^{\mathfrak{a}_{\phi}+\epsilon-1}m(r)dr<\infty$ for some $A \in (0,1)$ and any $\epsilon>0$. Moreover,  the same result also yields that  $\phi$ admits an analytical extension to  the half-plane $\{z\in \mathbb{C}; \: \Re(z)>\mathfrak{a}_{\phi}\}$.
Next, we  recall from ~\cite[Theorem 6.1]{lamperti1972semi} that the characteristic operator of $\chi$ is given for at least functions $f$ such that $f, tf' \in C_b(\R_+)$, the space of continuous and bounded functions on $\R_+$, by
\begin{equation}\label{eq:boldA}
\mathbf{A} f(t) = t^{-\alpha}\left(  b t f'(t) +  \int_0^{\infty} (f(te^{y})-f(t)) m(d e^{-y}) \right)
\end{equation}
 where $m(d e^{-y})$ stands for the image of the measure $m(dy)$ by the mapping $y\mapsto e^{-y}$.
%\begin{equation}\label{eq:boldA}
%\mathbf{A} f(t) =t^{1-\alpha} \left( b f'(t) + \int_0^1 f'\left(\frac{t}{y} \right) m(y)dy \right) = t^{-\alpha}\left(  b t f'(t) +  \int_0^t f'(r)m\left(\frac{t}{r} \right)dr \right).
%\end{equation}
%\begin{equation}\label{eq:boldA}
%\mathbf{A} f(t) = t^{-\alpha} \left(b t f'(t) + \int_1^{\infty} ( f(ty)-f(t)) m(dy) \right).
%\end{equation}
Next, since $\chi$ has a.s.~ non-decreasing sample paths, this entails that the paths of $\zeta$, as its right-inverse, are a.s.~ non-decreasing. Moreover, they are continuous if and only if the ones of $\chi$ are a.s.~ increasing which from the Lamperti mapping in ~\eqref{eq:def_lamp} is equivalent to $\Tau$ being a.s.~ increasing. This is well known, see e.g.~\cite[Section 5]{kyprianou2014fluctuations}, to be the case when the latter is not a compound Poisson process, that is when
%% it is easily seen that the Lamperti mapping  entails that $\chi$ has a.s.~increasing paths if and only if the ones of $\Tau$  are also increasing. It is well known that the later holds if either
%\textcolor{blue}{ the role played by $e^{-1}$ is not important, we just want to know whether we have a condition on $m$ at $0$ or $1$, it could be any constant $0<A<1$.}
%\begin{equation}\label{eq:ass}
%  b>0 \textrm{ or } \int_{e^{-1}}^{1}  m(dy)=\infty.
%\end{equation}
\begin{equation}\label{eq:ass}
\phi(\infty)=\infty \Longleftrightarrow  b>0 \textrm{ or } \int_{A}^{1}  m(dr)=\infty \textrm{ for some } A \in (0,1).
\end{equation}

%Note that when ~\eqref{eq:ass} holds, then $\phi(\infty)=\infty$ and $L$, being the right inverse of $\chi$, has clearly a.s.~continuous and non-decreasing paths. Otherwise, $\phi(\infty)<\infty$ and $L$ may have discontinuous paths.
We also define a subset of $\B$ which will be useful in the sequel, as follows
%\begin{align*}
%\BD =& \{ \phi \in \B;\ \mathfrak{a}_{\phi} \le -\alpha \text{ and } \lim_{u \downarrow 0} u \phi(u-\alpha)=0,\\
% &\text{ and either } \phi(\infty)=\infty \text{ or if } \phi(\infty)<\infty,\ \mathfrak{a}_{\phi}<-\alpha\}.
%\end{align*}
\begin{equation*}
\BD = \{ \phi \in \B;\ \mathfrak{a}_{\phi} \le -\alpha \text{ and } \lim_{u \downarrow 0} u \phi(u-\alpha)\leq 0 \}.
\end{equation*}
Note that if $\mathfrak{a}_{\phi}<-\alpha$, then we always have $\lim_{u \downarrow 0} u \phi(u-\alpha)=0$.
We refer to the monograph ~\cite{kyprianou2014fluctuations} for a nice account on L\'evy processes.
% by $\B$ the subset of Bernstein functions that satisfy the conditions \eqref{eq:mean} and \eqref{eq:ass}.\\
Now, for any $\phi \in \B$ we consider the function $W_\phi$ which is the unique positive-definite function, i.e.~the Mellin transform of a positive measure, that solves the functional equation, for $\Re(z)>\mathfrak{a}_{\phi}$,
\begin{equation}\label{eq:functional-equation-for-W_phi}
W_\phi(z+1) = \phi(z)W_\phi(z), \quad W_\phi(1) = 1.
\end{equation}
It is easily checked that for any integer $n$, $W_\phi(n+1)=\prod_{k=1}^{n}\phi(k)$, see \cite{Patie-Savov-Bern} for a thorough study of this functional equation.  Throughout, for a random variable $X$, we use  the notation
\[\mathcal{M}_{X}(z)=\E[X^{ z}]\]
for at least any $z\in i\R$, the imaginary line, meaning that $\mathcal{M}_{X}(z-1)$ is  its Mellin transform. %Throughout, $z^{\alpha}$ is the main branch of the complex analytic function in the complex half-plane $\Re(z) \geq 0$, so that $1^{\alpha}=1$.
Next, we recall that for any  integrable function $f$ on $(0,\infty)$, its Mellin transform is defined by
\begin{equation*}
\widehat{f}(z) = \int_0^{\infty}  q^{z-1} f(q)dq
\end{equation*}
for any complex $z$ such that this integrable is finite.
We also recall that  $\chi$ is the Lamperti process of index $\alpha \in (0,1)$ associated to the Bernstein function $\phi \in \B$, and we denote by $\zeta=(\zeta_t)_{t\geq 0}$ its right-inverse, see
~\eqref{eq:def_inv}. We recall that $\zeta$ was used in \cite{semiMarkov_LPS} as a time changed of self-similar Markov processes in the investigation of their extinction time. We are now ready to gather some substantial properties of $\zeta$.
%, and let us write, for any $t, q > 0$,
%\begin{equation}\label{eq:eta}
%\eta_t(q)=\E[e^{-qL_t}].
%\end{equation}

\begin{proposition}\label{prop:L_moments}
Let  $\phi \in \B$, $\alpha \in (0,1)$, and write, for any $u\geq0$,  $\phi_{\alpha}(u)=\phi(\alpha u) \in \B$.
\begin{enumerate}[(i)]
\item\label{it1} For any $t>0$ and $z\in \mathbb C$,
\begin{equation}\label{eq:mom-lamb}
\mathcal{M}_{\zeta_t}(z)=\frac{t^{z\alpha}}{\phi_{\alpha}'(0^+)} \frac{\Gamma(z)}{W_{\phi_{\alpha}}(z)}.
\end{equation}
 In particular, for any $t>0$, $z \mapsto \mathcal{M}_{\zeta_t}(z)$ in analytical on the half-plane $\Re(z)> \mathfrak{a}_{\phi}  = \inf \{u >-1; |\phi(u)|=\infty\} \in (-1,0]$.
\item  $\zeta$ is $\frac{1}{\alpha}$-self-similar and in particular, for all  $q,t>0$ $\E[e^{-q \zeta_t}]=\E[e^{-qt^{\alpha} \zeta_1}]$. Moreover, for any $|q|<\phi(\infty)$,
\begin{equation}\label{eq:mom-lamb}
\E[e^{-q \zeta_1}] = \mathcal{E}_{\phi_{\alpha}}(e^{i\pi}q)=\frac{1}{\phi_{\alpha}'(0^+)}\sum_{n=0}^{\infty}(-1)^n\frac{q^n}{nW_{\phi_{\alpha}}(n)}
\end{equation}
where $\mathcal{E}_{\phi_{\alpha}}$ extends to an analytical function on $\mathbb D_{\phi(\infty)}=\{z\in \mathbb{C}; |z|<\phi(\infty)\}$. Consequently, the law of $\zeta_t$ is, for all $t>0$, moment determinate. Moreover, as a Laplace transform of a Radon measure, the mapping $q \mapsto \mathcal{E}_{\phi_{\alpha}}(e^{i\pi}q)$ is, when $\phi(\infty)=\infty$, completely monotone.
\item\label{it3} Furthermore, if $\mathfrak{a}_{\phi_{\alpha}}<0$, then
% $\mathfrak{a}_{\phi_{\alpha}}<-\gamma $ %$|\phi_{\alpha}(z)|<+\infty$ on $\mathfrak{R}(z)>-\gamma$
%for some $0<\gamma \leq 1$, then
$\mathcal{E}_{\phi_{\alpha}}$ admits the following Mellin-Barnes integral representation, for any $0<a<|\mathfrak{a}_{\phi_{\alpha}}|$,
\begin{equation} \label{eq:etaMB}
\mathcal{E}_{\phi_{\alpha}}(z) = -\frac{1}{\phi_{\alpha}'(0^+)} \frac{1}{2\pi i} \int_{a - i \infty}^{a + i \infty}\frac{\phi_{\alpha}(-\xi)}{ \xi}\frac{\Gamma(-\xi)\Gamma(1-\xi)}{W_{\phi_{\alpha}}(1-\xi)}(-z)^{-\xi} d\xi
\end{equation}
which is absolutely convergent (at least) on the sector $\{z \in \mathbb{C}; |\arg(-z)|<\frac{\pi}{2} \}$ and  $q \mapsto \mathcal{E}_{\phi_{\alpha}}(e^{i\pi}q)  \in C^{\infty}_0(\R_+)$, the space of infinitely continuously differentiable functions on $\R_+$ vanishing at infinity along with their derivatives. %Then, the Mellin transform of $\Ep^*(z)=\Ep(-z)$ is given by
%\begin{equation*}
%\Ehat^*(z) = \frac{\Gamma(z)\Gamma(-z)}{\phi_{\alpha}'(0^+) W_{\phi_{\alpha}}(-z)},
%\end{equation*}
%and it is analytical on the strip $\{z \in \mathbb{C}; 0<\Re(z)<\gamma$\}.
\item Finally, assume that $\phi_{\alpha}$ is meromorphic on the half-plane $\Re(z)>-\p-\epsilon$ for some $\epsilon>0$ with a unique and simple pole at $-\p$. If $\p \in \N$ (resp.~$\p \not\in \N$) and $0<\left| \lim_{z \rightarrow 0} \prod_{k=0}^{\p}\phi_{\alpha}(z-k) \right| < \infty$ (resp.~$0 < |\mathcal{C}_{\p}| < \infty$ where $\mathcal{C}_{\p}=\lim_{z \rightarrow \p} (z-\p)\phi_{\alpha}(-z)$), then, writing $C(\p) = \frac{(-1)^{\p}}{\p W_{\phi_{\alpha}}(-\p)}$ (resp.~ $C(\p) =\frac{\Gamma(\p)\Gamma(-\p)}{W_{\phi_{\alpha}}(1-\p)} \mathcal{C}_{\p}$),
\begin{equation*}
\mathcal{E}_{\phi_{\alpha}}(e^{i\pi} q) \stackrel{+\infty}{\sim} \frac{C(\p)}{ \phi_{\alpha}'(0^+)} q^{-\p},
\end{equation*}
where  for two functions $f$ and $g$ we write $f \stackrel{a}{\sim} g$ if $\lim_{x \rightarrow a}\frac{f(x)}{g(x)}=1$.
\end{enumerate}
\end{proposition}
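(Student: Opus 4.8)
The plan is to extract the $q\to+\infty$ behaviour of $\Ep(e^{i\pi}q)=\E[e^{-q\zeta_1}]$ directly from the Mellin--Barnes representation \eqref{eq:etaMB}. Since $z=e^{i\pi}q$ gives $-z=q>0$, one has $(-z)^\xi=q^\xi$, so that $\Ep(e^{i\pi}q)$ is the inverse Mellin transform, in the variable $q$, of $\xi\mapsto -\frac{1}{\phi_{\alpha}'(0^+)}\frac{\phi_{\alpha}(\xi)}{\xi}\frac{\Gamma(\xi)\Gamma(1-\xi)}{W_{\phi_{\alpha}}(\xi+1)}$ taken along a vertical line inside the strip $(\mathfrak{a},0)$. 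The standard device for the large-$q$ asymptotics is then to move this line to the left, picking up the residues crossed: the residue at the right-most singularity lying to the left of the contour furnishes the leading term, and the shifted integral is the remainder.

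First I would pin down the singularities. Using the functional equation \eqref{eq:functional-equation-for-W_phi} in the form $\phi_{\alpha}(\xi)/W_{\phi_{\alpha}}(\xi+1)=1/W_{\phi_{\alpha}}(\xi)$, the integrand simplifies to $-\frac{1}{\phi_{\alpha}'(0^+)}\frac{\Gamma(\xi)\Gamma(1-\xi)}{\xi\,W_{\phi_{\alpha}}(\xi)}$. Two facts organise the poles: (i) $W_{\phi_{\alpha}}$ inherits from \eqref{eq:functional-equation-for-W_phi} simple poles at $0,-1,-2,\dots$, so $1/W_{\phi_{\alpha}}$ has simple zeros there that cancel the simple poles of $\Gamma(\xi)$ at the negative integers; and (ii) at $\xi=-\p$ the assumed simple pole of $\phi_{\alpha}$ forces, again through \eqref{eq:functional-equation-for-W_phi}, a simple zero of $W_{\phi_{\alpha}}$ when $\p\notin\N$ (for then $W_{\phi_{\alpha}}(1-\p)$ is finite and non-zero), whereas when $\p\in\N$ this pole is absorbed by the pole of $W_{\phi_{\alpha}}(1-\p)$ and the singularity at $-\p$ is carried instead by $\Gamma(\xi)$. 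Hence the only singularity met en route to $\Re(\xi)=-\p-\delta$ is the one at $-\p$, which is exactly the source of the dichotomy in the statement.

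Next comes the residue. When $\p\notin\N$ the pole is simple and sits in $1/W_{\phi_{\alpha}}$; writing $W_{\phi_{\alpha}}(\xi)\sim -\frac{W_{\phi_{\alpha}}(1-\p)}{\mathcal{C}_{\p}}(\xi+\p)$ near $-\p$, which also identifies $\mathcal{C}_{\p}$ with $-\operatorname{Res}_{-\p}\phi_{\alpha}$, and simplifying with $\Gamma(1+\p)/\p=\Gamma(\p)$, the residue is proportional to $\frac{\Gamma(\p)\Gamma(-\p)}{W_{\phi_{\alpha}}(1-\p)}\mathcal{C}_{\p}\,q^{-\p}$. When $\p\in\N$ the pole is that of $\Gamma$ at $-\p$, and with $\operatorname{Res}_{-\p}\Gamma=\frac{(-1)^\p}{\p!}$ and $\Gamma(1+\p)=\p!$ it becomes proportional to $\frac{(-1)^\p}{\p\,W_{\phi_{\alpha}}(-\p)}\,q^{-\p}$; in both cases one reads off the constant $C(\p)$ and the prefactor $1/\phi_{\alpha}'(0^+)$ of the statement. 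As a sanity check I would specialise to the inverse $\alpha$-stable subordinator, where $\Ep(e^{i\pi}q)=E_{\alpha}(-q)$, $\p=1$, and the coefficient must collapse to $1/\Gamma(1-\alpha)$.

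Finally I would bound the remainder. Having shifted to $\Re(\xi)=-\p-\delta$, which stays in the half-plane $\Re(z)>-\p-\epsilon$ where $\phi_{\alpha}$ is meromorphic, the remaining integral is $O(q^{-\p-\delta})=o(q^{-\p})$ as soon as the integrand is absolutely integrable on that line. Here $\Gamma(\xi)\Gamma(1-\xi)=\pi/\sin(\pi\xi)$ decays like $e^{-\pi|\Im(\xi)|}$, so it is enough to bound $1/W_{\phi_{\alpha}}(\xi)$ at most polynomially along vertical lines, which follows from the Stirling-type estimates for $W_{\phi_{\alpha}}$ in \cite{Patie-Savov-Bern} together with the growth of $\phi_{\alpha}$. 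The main obstacle is precisely this analytic bookkeeping: confirming the pole--zero cancellations so that $-\p$ is genuinely the leading singularity, and securing the uniform vertical decay needed both to legitimise the contour shift and to render the remainder negligible. The integer case is the more delicate one, since several factors are simultaneously singular at $-\p$ and the residue must be read from the combined local expansion rather than from $\phi_{\alpha}$ alone.
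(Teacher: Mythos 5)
Your proposal addresses only item (iv) of the proposition, taking the Mellin--Barnes representation \eqref{eq:etaMB} of item \eqref{it3} as an input. Within that scope your argument is essentially the paper's own: the paper likewise starts from the Mellin--Barnes integral, uses the functional equation \eqref{eq:functional-equation-for-W_phi} to locate the singularities of $\Gamma(\xi)/W_{\phi_{\alpha}}(\xi)$ (singularities of $1/\phi_{\alpha}$ lying on the negative real line, so that the first obstruction is at $-\p$), distinguishes exactly your two regimes --- for $\p\notin\N$ the pole comes from the induced simple zero of $W_{\phi_{\alpha}}$ at $-\p$ with $W_{\phi_{\alpha}}(\xi)\sim -\frac{W_{\phi_{\alpha}}(1-\p)}{\mathcal{C}_{\p}}(\xi+\p)$, for $\p\in\N$ it comes from $\Gamma$ with $W_{\phi_{\alpha}}(-\p)$ finite and nonzero by the hypothesis on $\prod_{k=0}^{\p}\phi_{\alpha}(z-k)$ --- shifts the contour past $-\p$ by Cauchy's theorem, and reads off $C(\p)$ from the residue. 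The only cosmetic differences are the direction of the shift (the paper works with the kernel $q^{-\xi}$ and moves the line to the right to $\Re(\xi)=\p+\epsilon$) and the treatment of the remainder (the paper invokes the Riemann--Lebesgue lemma where you use absolute integrability to get $O(q^{-\p-\delta})$); both rest on the same Stirling-type bounds \eqref{eq:gamma_stir}--\eqref{eq:W_phi_bound} that you correctly flag as the analytic bookkeeping to be secured. Your residue computations and the identification of $\mathcal{C}_{\p}$ with $-\operatorname{Res}_{-\p}\phi_{\alpha}$ are consistent with the paper's (up to the sign ambiguities already present between \eqref{eq:etaMB} and \eqref{eq:f} in the source).

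The genuine gap is that items \eqref{it1}, (ii) and \eqref{it3} are not proved at all, and (iv) cannot stand without \eqref{it3}. These items carry real content that your sketch presupposes: the moment identity $\mathcal{M}_{\zeta_t}(z)=\frac{t^{z\alpha}}{\phi_{\alpha}'(0^+)}\frac{\Gamma(z)}{W_{\phi_{\alpha}}(z)}$ is obtained in the paper by first reducing $\E[f(\zeta_t)]$ to $\E[f(t^{\alpha}\chi_1^{-\alpha})]$ via $\Prob(\zeta_1\leq s)=\Prob(\chi_1\geq s^{-\alpha})$ and then invoking the exponential-functional result \cite[Theorem 2.24]{Patie-Savov-Bern}, which is the one step that cannot be recovered from the functional equation alone; the series representation and analyticity of $\Ep$ on $\mathbb D_{\phi(\infty)}$, the moment determinacy, and above all the derivation of \eqref{eq:etaMB} itself (analyticity of $\xi\mapsto\E[\zeta_1^{-\xi}]\Gamma(\xi)$ on the strip $0<\Re(\xi)<|\mathfrak{a}|$, zero-freeness of $W_{\phi_{\alpha}}$, and the decay estimate \eqref{eq:f-hat-stirling} justifying Mellin inversion) all require proof before your contour-shifting argument can begin. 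As submitted, the proposal is a correct and faithful sketch of roughly the last quarter of the paper's proof, not a proof of the proposition.
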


We proceed by showing that the class of functions $\mathcal{E}_{\phi_{\alpha}}$, which is in bijection with the set of Bernstein functions $\B$, encompasses some famous special functions such as the Mittag-Leffler one and some $q$-series.
\begin{example}[Mittag-Leffler function]\label{rem:stable_sub}
It turns out that  the function $\mathcal{E}_{\phi_{\alpha}}$ is a generalization of the Mittag-Leffler function.
%\textcolor{red}{here you should say first that $\phi_{\alpha}$ below is a Bernstein function, see e.g. .., then you compute $W_{\phi_{\alpha}}$ and replace in $\overline{\mathrm{I}}_{\phi_{\alpha}}$ (you can use the notation  $\overline{\mathrm{I}}_{\phi_{\alpha}} =\mathcal{E}_{\phi_{\alpha}}$ throughout) to get the Mittag Leffler function. Then you deduce that $\eta_t(q)=...$ and you continue}.  Indeed, let $\chi$ be an $\alpha$-stable subordinator with $\alpha\in(0,1)$, which is also a positive $\alpha$-self-similar Markov process. Then, it is associated by the Lamperti mapping to a subordinator with Laplace exponent
Indeed, recall that $\alpha \in (0,1)$ and define
\begin{equation*}
\phi_{\alpha}(z) = \frac{\Gamma(\alpha+\alpha z)}{\Gamma(\alpha z)}, \quad \mathfrak{R}(z)>-1,
\end{equation*}
%which \textcolor{blue}{we do not need what follows that here, I did not erase as it may be needed elsewhere. You simply need to say as $\phi'(0^+) = \Gamma(\alpha)<\infty$ hence $\phi \in B$,. }
which is a Bernstein function,
%is the Laplace exponent of a subordinator associated with $\alpha$-stable subordinator via Lamperti mapping (and which is also a positive self-similar Markov process),
see e.g.~Loeffen et~al.~\cite{semiMarkov_LPS}. Furthermore, since $\phi'(0^+) = \Gamma(\alpha)<\infty$, we have $\phi \in \B$. Then, an easy algebra yields that $W_{\phi_{\alpha}}(z) = \frac{\Gamma(\alpha z)}{\Gamma(\alpha)}$, $\Re(z)>0$, with $W_{\phi_{\alpha}}(1)=1$.
%writing
%\begin{equation*}
%\phi_{\alpha}(z) = \frac{\Gamma(\alpha+\alpha z)}{\Gamma(\alpha)} \frac{\Gamma(\alpha)}{\Gamma(\alpha z)},
%\end{equation*}
%we deduce that
%\begin{equation*}
%W_{\phi_{\alpha}}(z) = \frac{\Gamma(\alpha z)}{\Gamma(\alpha)}, \quad \mathfrak{R}(z)>0 \quad \text{with} \quad W_{\phi_{\alpha}}(1)=1,
%\end{equation*}
%and $\phi'(0^+) = \Gamma(\alpha)$.
Therefore, by means of Proposition~\ref{prop:L_moments} and the recurrence relation of the gamma function, one gets, for $q \in \R$ and $t>0$,
\begin{equation*}
\mathcal{E}_{\phi_{\alpha}}(q) = \frac{1}{ \phi_{\alpha}'(0^+)}\sum_{n = 0}^{\infty} \frac{q^n \Gamma(\alpha)}{n \Gamma(\alpha n)} = \sum_{n = 0}^{\infty}\frac{q^n}{\Gamma(\alpha n+ 1)} = \mathcal{E}_{\alpha}(q)
\end{equation*}
where $\mathcal{E}_{\alpha}$ is the Mittag-Leffler function.
%Furthermore, we have
%\begin{equation*}
%\mathcal{E}_{\phi_{\alpha}}(e^{i\pi}qt^{\alpha}) = \sum_{n=0}^{\infty}\frac{(-q t^{\alpha})^n}{\Gamma(1+\alpha n)}=\mathcal{E}_{\alpha}(e^{i\pi}q t^{\alpha}),
%\end{equation*}
%which is the well-known Laplace transform of the inverse stable subordinator, see e.g.~Leonenko et~al.~\cite{leonenko2013fractional}.
%for $q,t>0$, we have
%\begin{eqnarray*}
%\eta_t(q) = \frac{1}{ \phi_{\alpha}'(0^+)} \sum_{n = 0}^{\infty} \frac{(-qt^{\alpha})^{n}}{nW_{\phi_{\alpha}}(n)} = \sum_{n=0}^{\infty}\frac{(-q t^{\alpha})^n}{\Gamma(1+\alpha n)}=\mathcal{E}_{\alpha}(-q t^{\alpha}),
%\end{eqnarray*}
%where $\mathcal{E}_{\alpha}(z)= \sum_{n=0}^{\infty} \frac{z^n}{\Gamma(\alpha n + 1)}$, $z \in \mathbb{C}$ is the Mittag-Leffler function, and which is the well-known Laplace transform of the inverse stable subordinator, see e.g.~Leonenko et~al.~\cite{leonenko2013fractional}.
Next, since  $\mathfrak{a}_{\phi_{\alpha}}=-1$, % $|\phi_{\alpha}(z)|<+\infty$ on $\mathfrak{R}(z)>-1$,
 \eqref{eq:etaMB} yields  that $\mathcal{E}_{\alpha}$ admits the following Mellin-Barnes integral representation, for any $0<a<1$,
\begin{eqnarray*}
\mathcal{E}_{\alpha}(z) &=& -\frac{1}{\phi_{\alpha}'(0^+)} \frac{1}{2\pi i} \int_{a - i \infty}^{a + i \infty}\frac{\phi_{\alpha}(\xi)}{ \xi}\frac{\Gamma(\xi)\Gamma(1-\xi)}{W_{\phi_{\alpha}}(1-\xi)}(-z)^\xi d\xi \\
%&=& -\frac{1}{\phi'(0^+)} \frac{1}{2\pi i} \int_{a - i \infty}^{a + i \infty}\frac{\Gamma(\alpha+\alpha \xi)}{\alpha \xi\Gamma(\alpha \xi)}\frac{\Gamma(\xi)\Gamma(1-\xi)}{\Gamma(\alpha+\alpha \xi)}\Gamma(\alpha)(-z)^\xi d\xi\\
&=& -\frac{1}{2\pi i} \int_{a - i \infty}^{a + i \infty} \frac{\Gamma(\xi)\Gamma(1-\xi)}{\Gamma(1-\alpha \xi)} (-z)^\xi d\xi
\end{eqnarray*}
where we use the Stirling formula of the gamma function, recalled  in ~\eqref{eq:gamma_stir} below, to obtain that this integral is absolutely convergent on the sector $\{z \in \mathbb{C}; |\arg z|<(2-\alpha)\frac{\pi}{2} \}$. Next, since the gamma function is a meromorphic function with simple poles at the non-positive integers and $z \mapsto 1/\Gamma(z)$ is an entire function, we have that $\phi_{\alpha}$ has a pole at $-1$ and it is meromorphic on $\mathfrak{R}(z)>-1-\epsilon$ for some $\epsilon>0$. Furthermore,
$
0< \left| \lim_{z \rightarrow 0} \phi_{\alpha}(z) \phi_{\alpha}(z-1) \right| = \left| \lim_{z \rightarrow 0} \frac{\Gamma(\alpha z + \alpha)}{\Gamma(\alpha z - \alpha)} \right| = \left| \frac{\Gamma(\alpha)}{\Gamma(-\alpha)} \right| < \infty.$
Thus, the conditions of Proposition~\ref{prop:L_moments} are satisfied with $\p=1$, and it yields that for any $q,t > 0$,
\begin{equation*}
\mathcal{E}_{\phi_{\alpha}}(e^{i\pi}q)\stackrel{+\infty}{\sim} \frac{q^{-1}}{\Gamma(1-\alpha)}
\end{equation*}
which is the well-known asymptotic behavior of the Mittag-Leffler function, see e.g.~Gorenflo et~al.~\cite[Chapter 3]{mittag_leffler}.
\end{example}

\begin{example}[$\mathfrak q$-series]\label{rem:qseries}
 Let now $\phi$ be the Laplace exponent of a Poisson process of parameter $\log \mathfrak q$, $0<\mathfrak q<1$, that is  $\phi(u)=1-\mathfrak q^u, u\geq 0$, which admits an extension as an entire function. Next, introducing the following  notation from the $\mathfrak q$-calculus, $(a;\mathfrak q)_n=\prod_{k=0}^{n-1} (1-a\mathfrak q^k)$, see \cite{q-series}, and observing that $W_{\phi_{\alpha}}(n+1) = (\mathfrak q^{\alpha};\mathfrak q^{\alpha})_n$, with $n$ an integer, we get that, for $|z|<\phi(\infty)=1$,
 \begin{equation*}
\mathcal{E}_{\phi_{\alpha}}(z) = \frac{1}{\alpha |\ln \mathfrak q|}\sum_{n = 0}^{\infty}\frac{1-\mathfrak q^n}{n} \frac{z^n}{(\mathfrak q^{\alpha};\mathfrak q^{\alpha})_n}.
\end{equation*}
  \end{example}

\begin{proof}
For any bounded Borelian function $f$, we have
\begin{eqnarray}
\E[f(\zeta_t)] &=& \E[f(t^{\alpha}\zeta_1)] = \int_{0}^{\infty}f(t^{\alpha}s)\Prob(\zeta_1 \in ds) \nonumber \\
&=&  \alpha\int_{0}^{\infty}s^{-\alpha-1}f(t^{\alpha}s)\Prob(\chi_1 \in ds^{-\alpha}) \nonumber \\
  &=&\int_{0}^{\infty}f((t/u)^{\alpha})\Prob(\chi_1 \in du) = \E[f\left(t^{\alpha}\chi_1^{-\alpha}\right)] \label{eq:self_sim}
\end{eqnarray}
where we used the identities $\Prob(\zeta_1\leq s)=\Prob(\chi_s \geq 1)=\Prob(\chi_1 \geq s^{-\alpha})$. Then, according  to \cite[Theorem 2.24]{Patie-Savov-Bern}, we deduce  that for any $\Re(z)>0$,
\begin{eqnarray}\label{eq:L^z}
\mathcal{M}_{\zeta_t}(z) = \E[\zeta^{ z}_t] &=& t^{\alpha z} \E[\chi_1^{-z\alpha}] =\frac{t^{\alpha z}}{ \phi_{\alpha}'(0^+)} \frac{\Gamma(z)}{W_{\phi_{\alpha}}(z)}.
\end{eqnarray}
Therefore, in particular, $z \mapsto \mathcal{M}_{\zeta_t}(z)$ is analytical on $\Re(z)>\mathfrak{a}_{\phi_{\alpha}}$, since using ~\eqref{eq:functional-equation-for-W_phi} and the recurrence property of the gamma function, we have
\begin{equation*}
\frac{\Gamma(z)}{\alpha W_{\phi_{\alpha}}(z)} = \frac{\Gamma(z+1)}{W_{\phi_{\alpha}}(z+1)} \frac{\phi_{\alpha}(z)}{\alpha z}
\end{equation*}
and $\lim_{u \downarrow 0}\frac{\phi_{\alpha}(u)}{\alpha u} = \phi'(0^+)<\infty$.
Next, by an expansion of  the exponential function combined with an application of a standard Fubini argument, the identity ~\eqref{eq:L^z} and the recurrence relation for the gamma function, one gets
\begin{equation*}
 \E\left[e^{q \zeta_1}\right]=\sum_{n=0}^{\infty}\E[\zeta^{ n}_1]\frac{q^n}{n!}=\frac{1}{\phi_{\alpha }'(0^+)}\sum_{n=0}^{\infty} \frac{1}{n}\frac{q^n}{W_{\phi_{\alpha}}(n)} = \mathcal{E}_{\phi_{\alpha}}(q)
\end{equation*}
where, by using the functional equation \eqref{eq:functional-equation-for-W_phi}, the series is easily checked to be absolutely convergent, and hence an analytical function, on $\{z\in \mathbb{C}; \: |z|<\phi(\infty)\}$. Then, admitting exponential moments, the law of $\zeta_t$ is moment-determinate for all $t>0$. Next, since $\chi$ is an $\alpha$-self-similar process, by ~\eqref{eq:def_inv}, plainly $\zeta$ is $\frac{1}{\alpha}$-self-similar.
 %Furthermore, recalling that $W_{\phi_{\alpha}}(n) = \prod_{k=1}^{n-1}\phi_{\alpha}(k)$, we have that $\overline{\mathrm{I}}_{\phi_{\alpha}}$ is an entire function.\\
To derive the Mellin-Barnes integral representation of $\Ep$, we first observe from ~\eqref{eq:L^z} that the mapping
\[ z \mapsto \E[\zeta_1^z] = \frac{1}{\phi_{\alpha }'(0^+)} \frac{\Gamma(z)}{W_{\phi_{\alpha}}(z)} \]
 is analytical on $\Re(z)>0$ since $z \mapsto \Gamma(z)$ and $z \mapsto W_{\phi_{\alpha}}(z)$ are analytical on $\Re(z)>0$,  and the latter is also zero-free on the same half-plane, see \cite[Theorem 4.1]{Patie-Savov-Bern}. Next, let us assume that $\mathfrak{a}_{\phi_{\alpha}} <0$, and observe,  using ~\eqref{eq:functional-equation-for-W_phi}, that
%$\mathfrak{a}_{\phi_{\alpha}}<-\gamma $ for some $0<\gamma \leq 1$, and define
%the Mellin transform of $\Ep^*$  by \textcolor{red}{this is not accurate here as we do not know whether $\Ep(-q)$ is integrable}
%\begin{equation}\label{eq:f_hat}
%\Ehat^*(z) = \int_0^{\infty}\Ep(-q)q^{z-1}dq.
%\end{equation}
%Then, a change of variable yields
\begin{equation*}
\int_0^{\infty}\E\left[e^{-q \zeta_1}\right] q^{\xi-1}dq  = \E\left[\zeta_1^{-\xi}\right] \Gamma(\xi) = \frac{\Gamma(-\xi)}{\phi_{\alpha }'(0^+) W_{\phi_{\alpha}}(-\xi)}\Gamma(\xi)= \frac{1}{\phi_{\alpha }'(0^+)} \frac{\phi_{\alpha}(-\xi)}{-\xi} \frac{\Gamma(\xi)\Gamma(1-\xi)}{W_{\phi_{\alpha}}(1-\xi)}
\end{equation*}
which is analytical on $0<\Re(\xi)<|\mathfrak{a}_{\phi_{\alpha}}|$. Indeed, first, since $\xi \mapsto \Gamma(\xi)$ is analytical on the right half-plane $\Re(\xi)>0$, plainly,  $\xi \mapsto \Gamma(\xi)$ is analytical on $ \Re(\xi)>0 $. Next, as above, we have that $\xi \mapsto W_{\phi_{\alpha}}(1-\xi)$ is  zero-free on $\Re(\xi)<1$ and meromorphic on $\Re(\xi)<|\mathfrak{a}_{\phi_{\alpha}}|$ with a simple pole at $1$, see \cite[Theorem 4.1]{Patie-Savov-Bern}, which cancels the one of $\Gamma(1-\xi)$, and we get the sought analyticity from the definition of $\mathfrak{a}_{\phi_{\alpha}}$.
%assumed that $\mathfrak{a}_{\phi_{\alpha}}<-\gamma $,
%we conclude that $\xi \mapsto \Ehat^*(\xi)$ is analytical on $0<\Re(\xi)<|\mathfrak{a}|$.
We write  %$\Ehat^*(\xi) =\int_0^{\infty}\E[e^{-q \zeta_1}] q^{\xi-1}dq$.
\begin{equation}\label{eq:Ehat*}
\Ehat^*(\xi) =\frac{1}{\phi_{\alpha }'(0^+)} \frac{\phi_{\alpha}(-\xi)}{-\xi} \frac{\Gamma(\xi)\Gamma(1-\xi)}{W_{\phi_{\alpha}}(1-\xi)}.
\end{equation}
%Thus, $z \mapsto \Lapeta(z)$ is analytical on $0<\mathfrak{R}(z)<\gamma$.
Next,  we recall that the Stirling's formula yields that for any $a \in \R$ fixed, when $|b| \rightarrow \infty$,
\begin{equation}\label{eq:gamma_stir}
|\Gamma(a+ib)| \stackrel{\infty}{\sim} C_a |b|^{a-\frac{1}{2}} e^{-|b|\frac{\pi}{2}}
\end{equation}
where $C_a > 0$, see e.g.~\cite[Lemma 9.4]{patie2015spectral}. Furthermore, ~\cite[Proposition 6.12(2)]{patie2015spectral} gives that for any $a > 0$,
\begin{equation}\label{eq:W_phi_bound}
\overline{\lim}_{|b| \rightarrow \infty} \frac{e^{-|b|\frac{\pi}{2}} |b|^{-\frac{1}{2}}}{|W_{\phi_{\alpha}}(a+bi)|}  \le c_+(a)
\end{equation}
for some positive finite constant $c_+(a)$. Therefore, taking $\xi = a+ ib$ for any $b \in \R$ and $0<a<|\mathfrak{a}_{\phi_{\alpha}}|$, using ~\eqref{eq:gamma_stir} and ~\eqref{eq:W_phi_bound}, there exists $\tilde{C}_a>0$ such that for $a$ fixed and $|b|$ large
\begin{eqnarray}\label{eq:f-hat-stirling}
|\Ehat^*(\xi)| = \left| \frac{\phi_{\alpha}(-\xi)}{-\xi}\frac{\Gamma(\xi)\Gamma(1-\xi)}{\phi_{\alpha }'(0^+)W_{\phi_{\alpha}}(1-\xi)}\right| \le \tilde{C}_a \ |b|^{2a-\frac{1}{2}} e^{-|b|\frac{\pi}{2}}
\end{eqnarray}
where we used the upper bound of $\phi$ found in \cite[Proposition 6.2]{Patie-Savov-Bern}.
 Thus, by Mellin's inversion formula, see e.g.~ \cite[Chapter 11]{mellin_tr_misra_lavoine}, one gets that for any $0<a<|\mathfrak{a}_{\phi_{\alpha}}|$,
\begin{equation*}
\E\left[e^{-z \zeta_1}\right] = \frac{1}{2\pi i} \int_{a-i \infty}^{a+ i \infty}\Ehat^*(\xi)z^{-\xi} d\xi
\end{equation*}
and thus by uniqueness of analytical extension, we get that
\begin{equation}\label{eq:f}
\Ep(z)= \frac{1}{\phi_{\alpha }'(0^+)} \frac{1}{2\pi i} \int_{a - i \infty}^{a + i \infty}\frac{\phi_{\alpha}(-\xi)}{-\xi}\frac{\Gamma(\xi)\Gamma(1-\xi)}{W_{\phi_{\alpha}}(1-\xi)}(-z)^{-\xi} d\xi
\end{equation}
%\begin{equation}\label{eq:f}
%\eta(q)= \frac{1}{ \phi_{\alpha}'(0^+)} \frac{1}{2\pi i} \int_{a - i \infty}^{a + i \infty}\frac{\phi_{\alpha}(-z)}{-z}\frac{\Gamma(z)\Gamma(1-z)}{W_{\phi_{\alpha}}(1-z)}q^{-z} dz,
%\end{equation}
 which is a function analytical on the sector $\{z \in \mathbb{C}; \ |\arg(-z)|<\frac{\pi}{2}\}$.
%the sector $\{z \in \mathbb{C}; \arg(z) < \frac{\pi}{2}\}$.
Indeed, first, by the discussion above, we have that $\xi\mapsto \Ehat^*(\xi)$ is analytical on the strip $0<\Re(\xi)<|\mathfrak{a}_{\phi_{\alpha}}|$.
Next, taking $\xi=a+ib$, using ~\eqref{eq:functional-equation-for-W_phi} and ~\eqref{eq:f-hat-stirling}, we have that when $|b|$ is large, there exists a constant $\tilde{C}_a > 0$ such that
\begin{equation}\label{eq:eta_hat_bound}
\left| \Ehat^*(\xi) (-z)^{-\xi} \right| \le  \tilde{C}_a\ |z|^{-a} \ |b|^{2a-\frac{1}{2}} e^{-|b|\frac{\pi}{2}+b \arg (-z)}.
\end{equation}
Putting pieces together, we indeed get the claimed analytical property of $\Ep$.
Now, to study the asymptotic behavior of $\Ep$, we write, for $q>0$,
\begin{equation}\label{eq:MB}
\Ep(e^{i\pi}q) = \frac{1}{\phi_{\alpha }'(0^+)} \frac{1}{2\pi i} \int_{a-i \infty}^{a+i \infty} \frac{\Gamma(\xi)\Gamma(-\xi)}{W_{\phi_{\alpha}}(-\xi)}q^{-\xi} d\xi,
\end{equation}
recall that the gamma function has simple poles at non-positive integers, and investigate the poles of $f_{\phi}(\xi) = \frac{\Gamma(\xi)}{W_{\phi_{\alpha}}(\xi)}$. Using ~\eqref{eq:functional-equation-for-W_phi}, we get that $f_{\phi}$ satisfies to the following functional equation
\begin{equation} \label{eq:def_f}
f_{\phi}(\xi+1) = \frac{\xi}{\phi_{\alpha}(\xi)} f_{\phi}(\xi).
\end{equation}
Next, since $0<\phi'(0^+) < \infty$, we have that $0<\lim_{\xi\rightarrow 0} \frac{\xi}{\phi_{\alpha}(\xi)} < \infty$. Moreover, since $q\mapsto\frac{1}{\phi_{\alpha}(q)}$ is the Laplace transform of a positive measure whose support is contained in $[0,\infty)$, see e.g.~\cite[Proposition 4.1(4)]{patie2015spectral}, it has its singularities on the negative real line. Thus, $\mathfrak{s}<0$ is a pole for $f_{\phi}$ if $\phi_{\alpha}(\mathfrak{s}) = -\infty$. Next, since $\phi_{\alpha}$ is meromorphic on $\Re(\xi)>-\p-\epsilon$ with a unique pole at $-\p$ with $\p>0$, we can extend the domain of analiticity of $W_{\phi_{\alpha}}$ on $\Re(\xi)>-\p-\epsilon$, and by Cauchy's theorem, we have
\begin{equation}\label{eq:eta_res}
\Ep(e^{i\pi}q) = \frac{1}{\phi_{\alpha }'(0^+) } Res(\Ep, \p) +  \frac{1}{\phi_{\alpha }'(0^+)} \frac{1}{2\pi i} \int_{\p+\epsilon-i \infty}^{\p+\epsilon +i \infty} \frac{\Gamma(\xi)\Gamma(-\xi)}{W_{\phi_{\alpha}}(-\xi)}q^{-\xi} d\xi.
\end{equation}
Next, if $\p \in \N$, then we have
\begin{equation*}
W_{\phi_{\alpha}}(\xi-\p) = \frac{W_{\phi_{\alpha}}(\xi+1)}{\prod_{k=0}^{\p}\phi_{\alpha}(\xi-k)}, \quad \Re(\xi)>0.
\end{equation*}
Hence, since $W_{\phi_{\alpha}}(1)=1$, we deduce that
\begin{equation*}
0< \lim_{\xi\rightarrow 0}|W_{\phi_{\alpha}}(\xi-\p)| = | W_{\phi_{\alpha}}(-\p)| = \frac{1}{\left| \lim_{\xi \rightarrow 0}\prod_{k=0}^{\p}\phi_{\alpha}(\xi-k)\right| } <\infty,
\end{equation*}
and since $Res(\Gamma, -n) = \frac{(-1)^n}{n!}$, $n = 1,2,\cdots$, we have
\begin{equation*}
Res(\Ep, \p) =\frac{\Gamma(\p)}{W_{\phi_{\alpha}}(-\p)} \frac{(-1)^{\p}}{\p !}q^{-\p} = \frac{(-1)^{\p} }{\p W_{\phi_{\alpha}}(-\p)}q^{-\p}.
\end{equation*}
Therefore, combining this with ~\eqref{eq:eta_res}, we obtain
\begin{equation*}
\Ep(e^{i\pi}q) \stackrel{+\infty}{\sim} \frac{(-1)^{\p}}{\phi_{\alpha }'(0^+)\p W_{\phi_{\alpha}}(-\p)}q^{-\p}.
\end{equation*}
Otherwise, if $\p \not\in \N$, we have
\begin{eqnarray*}
\lim_{\xi \rightarrow \p} (\xi-\p)\frac{\Gamma(-\xi)}{W_{\phi_{\alpha}}(-\xi)} = \Gamma(-\p) \lim_{\xi \rightarrow \p} \frac{(\xi-\p)\phi_{\alpha}(-\xi)}{W_{\phi_{\alpha}}(1-\xi)} = -\frac{1}{\p}\frac{\Gamma(1-\p)}{W_{\phi_{\alpha}}(1-\p)} \lim_{\xi \rightarrow \p}(\xi-\p)\phi_{\alpha}(-\xi)
\end{eqnarray*}
which is finite since $\frac{\Gamma(1-\p)}{W_{\phi_{\alpha}}(1-\p)} < \infty$ as $-\p$ was the first pole of the function $f_{\phi}$ defined in \eqref{eq:def_f}, and by assumption $0< |\mathcal{C}_{\p}|=|\lim_{\xi \rightarrow \p}(\xi-\p)\phi_{\alpha}(-\xi)|<\infty$. Hence,
\begin{equation*}
Res(\Ep,\p) = \frac{\Gamma(\p)\Gamma(-\p)}{W_{\phi_{\alpha}}(1-\p)} \mathcal{C}_{\p}\  q^{-\p}
\end{equation*}
and, with ~\eqref{eq:eta_res}, we get
\begin{equation*}
\Ep(e^{i\pi}q) \stackrel{+\infty}{\sim}  \frac{\Gamma(\p)\Gamma(-\p)}{\phi_{\alpha }'(0^+)W_{\phi_{\alpha}}(1-\p)} \mathcal{C}_{\p}\  q^{-\p}.
\end{equation*}
To conclude the proof,   we use the estimate \eqref{eq:f-hat-stirling} to apply the Riemann-Lebesgue lemma to get
\begin{equation*}
\lim_{q \to \infty} \frac{q^{-\p}}{\phi_{\alpha }'(0^+)} \int_{\p+\epsilon-i \infty}^{\p+\epsilon +i \infty} \frac{\Gamma(\xi)\Gamma(-\xi)}{W_{\phi_{\alpha}}(-\xi)}q^{-\xi} d\xi = \lim_{q \to \infty} q^{-\epsilon} \int_{- \infty}^{ \infty}e^{ib \ln q}\frac{\Gamma(\p+\epsilon+ib)\Gamma(-\p- \epsilon-ib)}{\phi_{\alpha }'(0^+)W_{\phi_{\alpha}}(-(\p+\epsilon+ib))}  db =0.
\end{equation*}
\end{proof}

%\begin{remark}
%One could also apply Cauchy's theorem to ~\eqref{eq:MB} to get the series expansion of $\eta$.
%\end{remark}

\section{Self-similar multiplicative convolution generalization of fractional operators}\label{sec:frac_op}
In this section,  we introduce a class of multiplicative convolution operators that generalize the fractional Caputo derivative and provide some interesting properties. In particular, we show that they have the same self-similarity property than the fractional Caputo derivative and we identify conditions under which these operators admit the functions $\Ep$ as eigenfunctions. %correspond to the characteristic operators of increasing self-similar Markov processes. %We point out that Toaldo \cite{toaldo}, see also \cite{chen2017time}, proposes an generalization of the fractional Caputo derivative as an additive convolution operator. The Caputo operator is only one that belongs to the two classes. \\
%\\
Inspired by the multiplicative convolution representation of the fractional Caputo derivative presented in \eqref{eq:Cap}, we introduce its generalization as follows. We denote by $AC[0,t]$ the space of absolutely continuous functions on $[0,t]$, $t>0$, and by $L^1(0,t)$ the space of Lebesgue integrable functions on $(0,t)$, $t>0$.
%Let $m$ be a function such that the mapping $y \mapsto m(y)$ is non-decreasing on $[0,1]$ and $\int_0^1(-\ln y \wedge 1)dm(y)<+\infty$.
\begin{definition}
\begin{enumerate}[1)]
\item Let $m$ be a non-negative measurable function defined on $(0,1)$, $b \in \R$ and write $\Phi(z) = bz + z \int_0^1 r^{z-1}m(r)dr$ for $z \in \mathbb{C}_{\Phi} =\{z\in \mathbb{C};r \mapsto r^{z-1}m(r) \in L^1(0,1) \}$. For $\alpha \in (0,1)$ and $f \in \calD(\opPhi)=C^1(\R_+)\cap\lbrace f \in AC[0,t]; y\mapsto f'(y)m\left( \frac{y}{t} \right) \in L^1(0,t) \rbrace$, we define
%\begin{equation}\label{eq:opm}
%\opm f(t)  = t^{-\alpha} f' \star m(t).
%\end{equation}
%Moreover,  for  $b \geq 0$ and $f\in \calD(\opPhi) = \calD(\opm) \cap C^1(\R_+)$,
\begin{equation} \label{eq:defDp}
\opPhi f(t) = t^{1-\alpha}  b f'(t) + t^{-\alpha} f' \star m(t)
\end{equation}
where we recall that $f' \star m(t)= \int_0^{t} f'(r) m\left(\frac{r}{t} \right)dr$.
\item If $\phi \in \B$ is defined by~\eqref{LKs}, $b \geq 0$ and $r \mapsto m(r)$ is a non-decreasing function on $(0,1)$ such that $\int_0^1 (-\ln r \wedge 1)rm(dr)<+\infty$, then $\Phi \equiv \phi$ and we write $\opPhi = \opphi$.
\end{enumerate}
\end{definition}
We proceed by providing some substantial properties of these generalized fractional operators.

\begin{proposition}\label{prop:operator}
\begin{enumerate}[(i)]
\item \label{item1} $\opPhi$ is a linear operator that satisfies the scaling property
\begin{equation*}
\opPhi d_cf (t) = c^{\alpha} \opPhi f (c t), \quad c,t>0.
\end{equation*}
%where $d_c f(t) = f(ct)$ is the dilation operator.
\item \label{item2} For any $z \in \mathbb{C}_{\Phi}$ and $t>0$, writing $p_z(t)=t^z$, we have
\begin{equation}
\opPhi p_z(t) = \Phi(z) p_{z-\alpha}(t).
\end{equation}
Consequently, if $\phi \in \B$, then for any $z \in \mathbb C_{(\mathfrak{a}_{\phi},\infty)}$, we have $\opphi p_z(t) = \phi(z) p_{z-\alpha}(t)$. Moreover, let $m_{\alpha}(r)=r^{-\alpha}m(r)$, $r \in (0,1)$ and for $z \in \mathbb{C}_{\bPhi} =\{z\in \mathbb{C};r \mapsto r^{z-1}m_{\alpha}(r) \in L^1(0,1) \}$, define
\begin{equation}\label{eq:bPhi}
\bPhi(z) = \frac{z}{z-\alpha}\Phi(z-\alpha).
\end{equation}
Then, for $z \in \mathbb{C}_{\bPhi}$ and $t>0$,
\begin{equation}
\opbPhi p_z(t) = \bPhi(z) p_{z-\alpha}(t).
\end{equation}
Note that, in any case, $\opbPhi p_0(t)=0$.
\item \label{item4} Assume that either $\phi \in \B$ with $\mathfrak{a}_{\phi}<0$ and $\phi(\infty)=\infty$ or $\phi \in \BD$ with $\mathfrak{a}_{\phi} < -\alpha$.
% $\mathfrak{a}_{\phi}= \sup \{u \leq 0; |\phi(u)|=\infty\} \leq -\alpha$ and $\lim_{u \rightarrow 0} u \phi(u-\alpha)=0$. %such that for $0<A<1$, $\int_A^1 y^{\alpha-1}m_{\alpha}(y) dy + \int_0^{A} y^{-1} m_{\alpha}(y) dy<\infty$ \textcolor{red}{(check if it is correct)}.
Then, writing $F_q(t)=\mathcal{E}_{\phi_{\alpha}}(qt^{\alpha})$, we have, for any $q \in \R$ and $t >0$,
\begin{equation}\label{eq:d_qE}
\opbPhi F_q(t) = q F_q(t)
\end{equation}
where, as in \eqref{eq:bPhi}, we have set $\bPhi(z)= \frac{z}{z-\alpha}\phi(z-\alpha)$.
%which holds either for any $q \in \R$ and $t >0$ such that $|q|t^{\alpha}<\phi(\infty)$, or for any $q \in \R$, $t>0$ if
%$\phi(\infty)<\infty$ and $\mathfrak{a}_{\phi}< -\alpha - \epsilon$ for some $\epsilon >0$.
%$\int_0^1 y^{-(\epsilon+1)} m_{\alpha}(y)dy <\infty$ for some $\epsilon>0$
%\textcolor{blue}{I am not sure that this condition is needed}.
Moreover, if in addition $\phi \in \BD$, see \eqref{LKs}, and $r \mapsto m_{\alpha}(r)=r^{-\alpha}m(r)$ is a non-decreasing function on $(0,1)$,
%and $\int_0^A y^{\alpha} dm(y)<+\infty$, $\int_A^1 \ln (y) dm(y)<+\infty$ for some $A \in (0, 1)$,
then the mapping $\bPhi$ is a Bernstein function, and $\bPhi \in \B$ if $\mathfrak{a}_{\phi} < -\alpha$.
%, $\bPhi \in \B$.
%\begin{equation*}
%\pmb{\phi}(u) = \frac{u}{u-\alpha}\phi(u-\alpha) \in \B.
%\end{equation*}
\item \label{item3} Let $\phi \in \B$. Then, we have the following  relation,  at least for functions $f$ such that $f, tf' \in C_b(\R_+)$,
%\begin{equation}
%\opphi \Lambda  f(t)  =-t^{-2\alpha} \Lambda \mathbf{A}f \left(t \right),
%\end{equation}
\begin{equation}
\opphi \Lambda f(t)  =-t^{-2\alpha} \Lambda \mathbf{A}  f \left(t \right),
\end{equation}
where $\Lambda f = f \circ \iota$ with  $\iota(y) = \frac{1}{y}$, is an involution, and $\mathbf{A}$ is the characteristic operator, defined in \eqref{eq:boldA}, of the self-similar Markov process associated  via the Lamperti mapping with $\phi$.
%and $g_{\alpha}(dy) = \frac{1}{y} \vartheta(-d \ln y)$ \textcolor{red}{ just saying that $b\geq 0$ and $\vartheta(-d \ln y)=g_{\alpha}(dy)$, check, please!  }
%\begin{equation*}
%\opg f(t) =  t^{-\alpha} \left(b t f'(t) + \int_0^{\infty} (f(t)- f(te^{-y})) g_{\alpha}(dy) \right)
%\end{equation*}
%\begin{equation*}
%\op f(t) =  t^{-\alpha} \left(b t f'(t) + \int_0^{1} (f(t)- f(ty)) m(dy) \right),
%\end{equation*}
%is the characteristic operator of the positive self-similar Markov process associated via the Lamperti mapping to the Bernstein function $\phi$.
%with $g_{\alpha}(dy)$ defined such that $g_{\alpha}(e^{-y}) = \int_{y}^{1}g_{\alpha}(dr)$, $0\le y \le 1$,
%\textcolor{blue}{it is not $g_{\alpha}(dr)$ Please change variable $y=e^{-rt}$ and $g_{\alpha}(dr)$ will come, Anna, $g_{\alpha}(y) =y^{-\alpha}g(y)$, what are you doing below? and please stop using new letters every line, $y,r,w$ keep one please} \textcolor{red}{change this part - with $g_{\alpha}$ defined such that $w^{-\alpha}g(w) = \int_{-\ln w}^{\infty} g_{\alpha}(dr)$},
%\begin{equation}\label{eq:bold_phi1}
%\pmb{\phi}(u) = b\ u + u \int_0^{1} y^{u-1}g_{\alpha}(y)dy, \quad u>0.
%\end{equation}
%Moreover,
%\begin{equation}\label{eq:Ap_z}
%\opg p_z(t) = \pmb{\phi}(z)p_{z-\alpha}(t).
%\end{equation}
\end{enumerate}
\end{proposition}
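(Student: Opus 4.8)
The plan is to reduce the claimed operator identity to an elementary change of variables in the multiplicative convolution followed by a Fubini interchange, matching term by term. Write $g=\Lambda f$, so that $g(t)=f(1/t)$ and hence $g'(t)=-t^{-2}f'(1/t)$. Splitting the operator into its drift and convolution parts,
\[
\opphi g(t) = t^{1-\alpha}b\,g'(t) + t^{-\alpha}\, g'\diamond m(t),
\]
the drift term becomes $t^{1-\alpha}b\,(-t^{-2}f'(1/t)) = -t^{-\alpha}\tfrac{b}{t}f'(1/t)$, which is already of the desired form; so the whole content lies in the convolution term.

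For the convolution term I substitute $u=1/s$ in $g'\diamond m(t)=\int_0^t g'(s)\,m(s/t)\,ds$. Since $-s^{-2}\,ds = du$ and the range $s\in(0,t)$ becomes $u\in(1/t,\infty)$, this yields
\[
g'\diamond m(t) = -\int_{1/t}^{\infty} f'(u)\, m\!\left(\tfrac{1}{tu}\right) du.
\]
On the other side, I transform the nonlocal part of $\mathbf A$ by the change of variables $r=e^{-y}$ in \eqref{eq:boldA}, which turns the image measure $m(de^{-y})$ into the Stieltjes measure $m(dr)$ on $(0,1)$ and $f(te^y)$ into $f(t/r)$, so that evaluating at $1/t$ gives
\[
\mathbf A f(1/t) = t^{\alpha}\left(\tfrac{b}{t}f'(1/t) + \int_0^1\big(f(\tfrac{1}{tr})-f(1/t)\big)\,m(dr)\right).
\]

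The crux is to identify the convolution term with this nonlocal integral. Writing $f(\tfrac{1}{tr})-f(1/t)=\int_{1/t}^{1/(tr)} f'(v)\,dv$ (valid since $\tfrac{1}{tr}\ge\tfrac1t$ for $r\le1$) and applying Fubini on the region $\{0<r<1,\ 1/t<v<1/(tr)\}=\{v>1/t,\ 0<r<\tfrac{1}{tv}\}$, together with the convention $\int_0^{a}m(dr)=m(a)$, gives
\[
\int_0^1\big(f(\tfrac{1}{tr})-f(1/t)\big)\,m(dr) = \int_{1/t}^{\infty} f'(v)\, m\!\left(\tfrac{1}{tv}\right) dv = -\,g'\diamond m(t).
\]
Collecting the drift and convolution parts then yields $\opphi\Lambda f(t) = -t^{-\alpha}\big(\tfrac{b}{t}f'(1/t)+\int_0^1(f(\tfrac{1}{tr})-f(1/t))\,m(dr)\big) = -t^{-2\alpha}\Lambda\mathbf A f(t)$, as claimed. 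The main obstacle is one of justification rather than algebra: I must check that the change of variables and the Fubini interchange are legitimate and that all integrals converge. This is where the hypotheses $f,tf'\in C_b(\R_+)$ and the Lévy-measure integrability $\int_0^1(-\ln r\wedge 1)\,r\,m(dr)<\infty$ (equivalently $\phi'(0^+)<\infty$) enter: $tf'\in C_b$ gives the bound $f'(v)=O(1/v)$ which, combined with the decay of the tail $m(1/(tv))$ as $v\to\infty$ and its integrability near $v=1/t$, controls the double integral and licenses the interchange; I should also be careful with the endpoint conventions relating the non-decreasing function $m$ to its Stieltjes measure as $r\to 0$ and $r\to 1$.
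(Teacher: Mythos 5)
Your argument addresses only item (iv) of the proposition, the intertwining relation $\opphi \Lambda f(t) = -t^{-2\alpha}\Lambda\mathbf{A}f(t)$. For that item your computation is correct and is essentially the paper's own proof run in the opposite direction: the paper starts from $\mathbf{A}$, integrates the jump term by parts to convert $\int_0^\infty(f(te^y)-f(t))\,m(de^{-y})$ into $\int_t^\infty f'(r)\,m(t/r)\,dr$, substitutes, and concludes from $\Lambda\mathbf{A}\Lambda f(t)=-t^{2\alpha}\opphi f(t)$ together with the involution property of $\Lambda$; your Fubini interchange over the region $\{v>1/t,\ 0<r<1/(tv)\}$ is exactly that integration by parts, and the hypotheses you invoke ($tf'\in C_b(\R_+)$ and $\int_0^1(-\ln r\wedge1)\,r\,m(dr)<+\infty$) are the right ones to justify it, together with the normalization $m(a)=\int_0^a m(ds)$ that the paper also uses.

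The gap is that the proposition has four parts and you have proved one of them. Items (i) and (ii) (the scaling property, and the identities $\opPhi p_z=\Phi(z)p_{z-\alpha}$ and $\opbPhi p_z=\bPhi(z)p_{z-\alpha}$) are short changes of variables, but they do not appear in your write-up, and item (ii) is needed as input for item (iii). Item (iii) is the analytic heart of the proposition and is entirely missing: one must show that $F_q(t)=\Ep(qt^{\alpha})$ satisfies $\opbPhi F_q=qF_q$, which requires applying $\opbPhi$ term by term to the series $\frac{1}{\phi_{\alpha}'(0^+)}\sum_{n}\frac{q^n t^{\alpha n}}{nW_{\phi_{\alpha}}(n)}$ and justifying the interchange of summation and integration by a dominated convergence estimate that uses precisely the defining conditions of $\BD$, namely $\mathfrak{a}_{\phi}\le-\alpha$ (so that $\int_0^1 r^{\alpha n-\alpha-1}m(r)\,dr<\infty$ for $n\ge1$) and $\lim_{u\downarrow0}u\phi(u-\alpha)=0$ (to control the $n=0$ term); the paper then completes the argument via the Mellin--Barnes representation of $\Ep$ and a contour shift from $\Re(z)=a$ to $\Re(z)=a+1$, using the functional equation \eqref{eq:functional-equation-for-W_phi} and the Stirling-type bounds on $\Gamma$ and $W_{\phi_{\alpha}}$. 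The assertion that $\bPhi$ is a Bernstein function when $m_{\alpha}$ is non-decreasing, and that $\bPhi\in\B$ exactly when $\mathfrak{a}_{\phi}<-\alpha$ (via $\bPhi'(0^+)=-\phi(-\alpha)/\alpha$), is likewise unaddressed. As it stands the proposal cannot be accepted as a proof of the full proposition.
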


\begin{remark}
Note that if  $\phi \in \B$ with $\phi(\infty)<\infty$, then \eqref{eq:d_qE} still holds for any $q \in \R$ and $t > 0$, such that $|q|t^{\alpha}<\phi(\infty)$.
\end{remark}

%\begin{remark}
%If $\phi \in \B$, then item $(ii)$ and $(iii)$ imply that
%\begin{equation}\label{eq:Ap_z}
%\opphi p_z(t) = \pmb{\phi}(z)p_{z-\alpha}(t).
%\end{equation}
%\end{remark}
\begin{example}
Let $\chi$ be an $\alpha$-stable subordinator, and note that it is also an increasing positive self-similar Markov process. Moreover, the Laplace exponent of the subordinator associated with $\chi$, via the Lamperti mapping, is well known to be $\phi(u) = \frac{\Gamma(u+\alpha)}{\Gamma(u)}$, $u>0$, see e.g.~\cite{semiMarkov_LPS}, and note that in this case $\mathfrak{a}_{\phi}= -\alpha$ with $\lim_{u \downarrow 0} u \phi(u-\alpha)=\frac{1}{\Gamma(-\alpha)}<0$. Using the integral representation for the ratio of two gamma functions, see e.g.~\cite[(15)]{gammaratio}, we can write $\phi$ as
\begin{equation*}
\phi(u) =  \frac{\alpha}{\Gamma(1-\alpha)} \int_0^{\infty} (1-e^{- u y}) \frac{e^{-\alpha y}}{(1-e^{-y})^{\alpha+1}}dy,
\end{equation*}
%and we deduce that in~\eqref{LKs}, $b=0$ and
%$\vartheta(dy) = \frac{\alpha}{\Gamma(1-\alpha)}\frac{e^{-\alpha y}}{(1-e^{-{y}})^{\alpha+1}}dy$.
%Therefore, we have
from where we deduce,  since $m(e^{-y})$ is  the tail of a L\'evy measure,  that, for any $y>0$,
\begin{eqnarray*}
m(e^{-y}) &=& \frac{\alpha}{\Gamma(1-\alpha)}\int_y^{\infty}  \frac{e^{-\alpha r}}{(1-e^{-{r}})^{\alpha+1}}dr= \frac{\alpha}{\Gamma(1-\alpha)}\int_y^{\infty}  \frac{e^r}{(e^r-1)^{\alpha+1}}dr =\frac{(e^y-1)^{-\alpha}}{\Gamma(1-\alpha)}.
\end{eqnarray*}
Hence, we have, for any $r \in (0,1)$,
\begin{equation*}
m(r) =r^{\alpha}\frac{(1-r)^{-\alpha}}{\Gamma(1-\alpha)}.
\end{equation*}
Next, noting that $r \mapsto m_{\alpha}(r)=r^{-\alpha}m(r)=\frac{(1-r)^{-\alpha}}{\Gamma(1-\alpha)}$ is a non-decreasing function on $(0,1)$, item~\eqref{item4} implies that $\bPhi$ is a Bernstein function, and we obtain
\begin{eqnarray*}
\opbPhi f(t) &=&
 \frac{t^{-\alpha}}{\Gamma(1-\alpha)} \int_0^{t} f'(y)\left( \frac{y}{t}\right)^{-\alpha}\left(\frac{t}{y} -1\right)^{-\alpha}dr =
%&=& \frac{1}{\Gamma(1-\alpha)}\int_0^t \frac{f'(y)}{(t-y)^{\alpha}}dy =
\frac{~^C d^{\alpha}}{dt^{\alpha}}f(t).
\end{eqnarray*}
%which coincides with the fractional Dzerbayshan-Caputo derivative defined, for an absolutely continuous function $f(t)$, $t>0$, by
%\begin{equation*}
%\frac{~^C d^{\alpha}}{dt^{\alpha}}f(t) = \frac{1}{\Gamma(1-\alpha)}\int_0^t \frac{f'(y)}{(t-y)^{\alpha}}dy.
%\end{equation*}
%\textcolor{blue}{
%\begin{eqnarray*}
%\frac{~^C d^{\alpha}}{dt^{\alpha}}f(t) &=& \frac{1}{\Gamma(1-\alpha)}\int_0^t \frac{f'(y)}{(t-y)^{\alpha}}dy \\
%&=& \frac{t^{-\alpha}}{\Gamma(1-\alpha)} \int_0^t f'(y) \left(\frac{t}{y}\right)^{\alpha}  \left(\frac{t}{y}-1\right)^{-\alpha}dy \\
%&=& t^{-\alpha} \int_0^t f'(y) \left(\frac{t}{y}\right)^{\alpha}  m\left(\frac{t}{y}\right)dy \\
%&=& t^{-\alpha}\int_0^t f'(y) m_{\alpha}\left(\frac{t}{y}\right)dy =t^{-\alpha} f' \star m_{\alpha}(y),
%\end{eqnarray*}
%where $m_{\alpha}=y^{\alpha}m(y)$, $m(e^y)$ being the tail of the L\'evy measure.
%}

\end{example}

\begin{example}
Let $\phi(u)=1-\mathfrak{q}^{u}$, $u \geq 0$, be as in Example \ref{rem:qseries} with $0<\mathfrak{q}<1$. Then, we can write
\begin{equation*}
\phi(u) = \int_0^{\infty} (1-e^{-uy}) \delta_{-\log \mathfrak{q}}(y),
\end{equation*}
where $\delta_{-\log \mathfrak{q}}$ is the Dirac measure supported on $\{ -\log \mathfrak{q} \}$. Therefore, as above, we deduce that, for any $y \geq 0$,
\begin{equation*}
m(e^{-y}) = \int_y^{\infty}  \delta_{-\log \mathfrak{q}}(r) = \mathbbm{1}_{\{ y \le -\log \mathfrak{q} \}}.
\end{equation*}
Thus, a change of variable yields, that for any $r \in (0,1)$,
\begin{equation*}
m(r) = \mathbbm{1}_{\{\mathfrak{q} \leq  r < 1  \}}.
\end{equation*}
Therefore, since $r \mapsto r^{-\alpha} m(r)$ is a non-decreasing function, item \eqref{item4} implies that $\bPhi$ is a Bernstein function, and we have
\begin{eqnarray*}
\opbPhi f(t) &=& t^{-\alpha}\int_0^{t} f'(y) \left( \frac{y}{t} \right)^{-\alpha} \mathbbm{1}_{\{ y \geq t\mathfrak{q} \}} dy = \int_{t\mathfrak{q}}^{t} f'(y) y^{-\alpha} dy \\
&=& t^{-\alpha} f(t) - (t\mathfrak{q})^{-\alpha} f(t \mathfrak{q}) + \alpha \int_{t\mathfrak{q}}^t f(y) y^{-\alpha -1}dy
\end{eqnarray*}
where in the last step we performed an integration by parts.
\end{example}

\begin{proof}
First, plainly $\opPhi$ is a linear operator and for $c,t > 0$, we note that
\begin{eqnarray*}
\opPhi d_cf (t) &=& t^{1-\alpha}bcf'(ct) + t^{-\alpha} \int_0^{t} cf'(cy) m\left(\frac{y}{t} \right)dy\\
&=& c^{\alpha}b (ct)^{1-\alpha}f'(ct) +c^{\alpha} (ct)^{-\alpha}  \int_0^{ct} f'(r) m\left(\frac{r}{ct} \right)dr \\
&=& c^{\alpha}\opPhi f (c t)
\end{eqnarray*}
%Then, clearly $\opPhi d_cf (t) = c^{\alpha} \opPhi f (c t)$
and this completes the proof of item \eqref{item1}. To prove item \eqref{item2}, we perform a change of variable and get
\begin{eqnarray*}
\opPhi p_z(t) &=& t^{1-\alpha}  b  z t^{z-1} + t^{-\alpha} \int_0^t z  y^{z-1} m\left( \frac{y}{t}\right) dy \\
&=& t^{z-\alpha}  b  z + t^{z-\alpha} z \int_0^1 r^{z-1}m(r) dr =  \Phi(z) p_{z-\alpha}(t).
\end{eqnarray*}
%\textcolor{blue}{
%\begin{eqnarray*}
%\int_0^{\infty} (f(te^{y})-f(t)) \Pi(dy) &=& -\int_0^{\infty} (f(te^{y})-f(t)) d\overline{\Pi}(y) = \int_0^{\infty} te^{y} f'(te^{y})\overline{\Pi}(y)dy \\
%&=& \int_0^{\infty} te^{y} f'(te^{y})m(e^{-y})dy = \int_0^1 f'(r)m \left( \frac{t}{r} \right) dr
%\end{eqnarray*}
%}
%where we used that $dm(y)=m(dy)$. Therefore, we obtain \textcolor{blue}{this is not the generator, please start from Lamperti expression and make an integration by parts}
%\begin{equation*}
%\op f(t) = t^{-\alpha} \left(b t f'(t) + \int_0^{1} (f(t)- f(ty)) m(dy) \right),
%\end{equation*}
%which after a simple change of variable takes the form of~\eqref{eq:boldA}, and thus is the characteristic operator of a positive self-similar Markov process associated via Lamperti mapping with $\phi$.\\
We now prove item \eqref{item4}.  We first take $\phi \in \B$ and thus deduce that  the mapping
\begin{equation}\label{eq:phi-boldphi}
z\mapsto \bPhi(z) = \frac{z}{z-\alpha}\phi(z-\alpha) = b z + z \int_0^{1} r^{z-1}m_{\alpha}(r)dr
\end{equation}
is analytical on the right-half plane $\Re(z)> \mathfrak{a}_{\phi}+\alpha$, see below \eqref{eq:def_a}. % where we recall that $\mathfrak{a}_{\phi}+\alpha\leq 0$ since $\phi \in \BD$. %$0<\phi'(0^+)<\infty$, with $\lim_{u\downarrow 0}\bPhi(u) = \lim_{u\downarrow 0}\frac{u}{u-\alpha}\phi(u-\alpha)=0$.
Moreover, from item \eqref{item2}, we  get, for such a $z$, that
\begin{equation}\label{eq:Ap_z}
\opbPhi p_z(t) = \bPhi(z)p_{z-\alpha}(t).
\end{equation}
 Next,  let us assume first that $\phi(\infty)=\infty$ and $\mathfrak{a}_{\phi} <0$. Then,
%for $q\in \R$ and $|q|t^{\alpha}<\phi(\infty)$,
using the series expansion of $\mathcal{E}_{\phi_{\alpha}}$ in \eqref{eq:mom-lamb} combined with the previous identity \eqref{eq:Ap_z} with $z=\alpha n$, we get, writing $F_q(t)=\mathcal{E}_{\phi_{\alpha}}(qt^{\alpha})$, for any $q \in \R$ and $t>0$,
\begin{eqnarray}
\opbPhi F_q(t) &=& t^{1-\alpha}b F_q'(t) + t^{-\alpha} \int_0^t F_q'(y) m\left( \frac{y}{t}\right)dy \nonumber\\
&=& t^{1-\alpha}b \frac{\partial}{\partial t} \sum_{n=1}^{\infty} \frac{q^n  p_{\alpha n}(t)}{nW_{\phi_{\alpha}}(n)} + t^{-\alpha} \int_0^t \left(\frac{\partial}{\partial y} \sum_{n=0}^{\infty} \frac{q^n  p_{\alpha n}(y)}{nW_{\phi_{\alpha}}(n)}\right)\ m\left( \frac{y}{t}\right)dy \nonumber\\
&=& \frac{1}{ \phi_{\alpha}'(0^+)} \sum_{n=1}^{\infty} \frac{q^n \opbPhi p_{\alpha n}(t)}{nW_{\phi_{\alpha}}(n)} \nonumber \\
&=& \frac{1}{ \phi_{\alpha}'(0^+)} \sum_{n=1}^{\infty} \frac{q^n \bPhi(\alpha n) p_{\alpha(n-1)}(t)}{nW_{\phi_{\alpha}}(n)} \nonumber \\
&=& \frac{1}{ \phi_{\alpha}'(0^+)} \sum_{n=1}^{\infty} \frac{q^n p_{\alpha(n-1)}(t)}{(n-1)W_{\phi_{\alpha}}(n-1)} = q F_q(t) \label{eq:q=1}
\end{eqnarray}
where we used, from \eqref{eq:defDp}, that $\opbPhi p_{0}(t)=0$, the functional equation satisfied by $W_{\phi_{\alpha}}$, see \eqref{eq:functional-equation-for-W_phi}, the relation \eqref{eq:phi-boldphi}, the fact that power series can be term-by-term differentiated inside the interval of its convergence, and changing the order of integration and summation by a dominated convergence argument which is justified since
\begin{eqnarray*}
\sum_{n=1}^{\infty} \frac{|q|^n}{nW_{\phi_{\alpha}}(n)}\int_0^t \alpha n y^{\alpha n -1} \left(\frac{t}{y}\right)^{\alpha} m\left(\frac{y}{t}\right)dy &=& \sum_{n=1}^{\infty}  \frac{|q|^nt^{\alpha n}}{nW_{\phi_{\alpha}}(n)} \alpha n \int_0^1 r^{\alpha n - \alpha - 1}m(r)dr\\
&=&  \sum_{n=1}^{\infty}  \frac{|q|^nt^{\alpha n}}{W_{\phi_{\alpha}}(n)} \alpha  \int_0^1 r^{\alpha n - \alpha - 1}m(r)dr < +\infty
\end{eqnarray*}
 as  by assumption $a_{\phi}<0$, that is $\int_0^1 r^{\alpha n-\alpha -1}m(r)dr <\infty$ for any $n \geq 1$, see below \eqref{eq:def_a} again.
Now, we move to the proof of item $\eqref{item4}$ when   $\phi \in \BD$ with $\mathfrak{a}_{\phi} < -\alpha$. Recall from ~\eqref{eq:etaMB} that one has, for any $0<a<|\mathfrak{a}_{\phi_{\alpha}}|$, $q\in \R$, $t >0$,
\begin{equation*}
F_{q}(t) = \Ep(qt^{\alpha}) = \frac{1}{2\pi i} \int_{a-i \infty}^{a+ i \infty}(-qt^{\alpha})^{-z}\Ehat^*(z)dz = \frac{1}{2\pi i} \int_{a-i \infty}^{a+ i \infty} p_{-\alpha z}(t) (-q)^{-z} \Ehat^*(z)dz.
\end{equation*}
%\begin{equation*}
%\eta_t(q) = \eta(qt^{\alpha}) = \frac{1}{2\pi i} \int_{a-i \infty}^{a+ i \infty}(qt^{\alpha})^{-z}\Lapeta(z)dz = \frac{1}{2\pi i} \int_{a-i \infty}^{a+ i \infty} p_{-\alpha z}(t) q^{-z} \Lapeta(z)dz.
%\end{equation*}
Next, observe that for any $z=a+ib$ with  $|b|$ large,
\begin{equation*}
\left| \frac{\partial}{\partial t} p_{-\alpha z}(t) (-q)^{-z} \Ehat^*(z)\right| \leq \alpha |q|^{-a} t^{-\alpha a -1}  \left|   z\Ehat^*(z) \right|  \le  \tilde{C}_a\ t^{-\alpha a -1}  |q|^{-a} \ |b|^{2a+\frac{1}{2}} e^{-|b|\frac{\pi}{2}}
\end{equation*}
where $\tilde{C}_a>0$ and we used the bound \eqref{eq:eta_hat_bound}. This justifies the application of  the dominated convergence theorem  to get
\begin{equation*}
\frac{\partial}{\partial t}F_q(t) =\frac{\partial}{\partial t} \frac{1}{2\pi i} \int_{a-i \infty}^{a+ i \infty} p_{-\alpha z}(t) (-q)^{-z} \Ehat^*(z)dz = - \frac{\alpha }{2\pi i} \int_{a-i \infty}^{a+ i \infty} z t^{-\alpha z-1} (-q)^{-z} \Ehat^*(z)dz.
\end{equation*}
Thus,  denoting $\opma f(t)= t^{-\alpha}f' \star m_{\alpha}(t)$,  we obtain
\begin{eqnarray*}
\opma F_q(t) %&=& \frac{t^{-\alpha}}{2\pi i} \int_0^{t} \frac{\partial}{\partial y} \int_{a-i \infty}^{a+ i \infty} p_{-\alpha z}(y) (-q)^{-z} \Ehat^*(z)dz\ m_{\alpha}\left(\frac{y}{t} \right)dy \\
&=& \frac{t^{-\alpha}}{2\pi i} \int_0^{t} \int_{a-i \infty}^{a+ i \infty} \frac{\partial}{\partial y} p_{-\alpha z}(y) (-q)^{-z} \Ehat^*(z)dz\  m_{\alpha}\left(\frac{y}{t} \right)dy.
\end{eqnarray*}
Now, by \eqref{eq:eta_hat_bound}, we have
\begin{eqnarray*}
\left|  \frac{\partial}{\partial y} p_{-\alpha z}(y) (-q)^{-z} \Ehat^*(z) m_{\alpha}\left(\frac{y}{t} \right)  \right| \le \tilde{C}_a\ y^{-\alpha a -1 }  |q|^{-a} \ |b|^{2a+\frac{1}{2}} e^{-|b|\frac{\pi}{2}} m_{\alpha}\left(\frac{y}{t} \right).
\end{eqnarray*}
Therefore, since $\mathfrak{a}_{\phi} <  -\alpha$ and $0<a<|\mathfrak{a}_{\phi_{\alpha}}|$, one can choose $a = \frac{\epsilon}{\alpha}$ for $\epsilon>0$ such that $\mathfrak{a}_{\phi}< -\alpha - \epsilon$,  to deduce that, for all $t>0$,
\begin{equation}
\int_0^t y^{-\alpha a -1 }  m_{\alpha}\left(\frac{y}{t} \right) dy = t^{\alpha} \int_0^t y^{-(\alpha+\alpha a+1)} m(y)dy <\infty.
\end{equation}
Thus,
\begin{equation*}
\int_0^{t} \int_{a-i \infty}^{a+ i \infty} \tilde{C}_a\ y^{-\alpha a -1 }  |q|^{-a} \ |b|^{2a+\frac{1}{2}} e^{-|b|\frac{\pi}{2}} m_{\alpha}\left(\frac{y}{t} \right) dz\ dy <\infty
\end{equation*}
and,  by Fubini's theorem, we get
\begin{equation*}
\opma F_q(t) = \frac{1}{2\pi i} \int_{a-i \infty}^{a+ i \infty}\opm p_{-\alpha z}(t) (-q)^{-z} \Ehat^*(z)dz.
\end{equation*}
Finally, putting pieces together, we have
\begin{eqnarray*}
\opbPhi F_{q}(t) &=& t^{1-\alpha}\ b\ \frac{\partial}{\partial t} F_q(t) + \opma F_q(t) \\
&=& t^{1-\alpha}\ b\ \frac{1}{2\pi i} \int_{a-i \infty}^{a+ i \infty}  \frac{\partial}{\partial t} p_{-\alpha z}(t) (-q)^{-z} \Ehat^*(z)dz +  \frac{1}{2\pi i} \int_{a-i \infty}^{a+ i \infty} \opma p_{-\alpha z}(t) (-q)^{-z} \Ehat^*(z)dz \\
&=&  \frac{1}{2\pi i} \int_{a-i \infty}^{a+ i \infty}\opbPhi p_{-\alpha z}(t)(-q)^{-z}\Ehat^*(z)dz.
\end{eqnarray*}
Next,  since from~\eqref{eq:Ap_z} we have that $\opbPhi p_{-\alpha z}(t) =  \frac{z}{z+1} \phi_{\alpha}(-z-1)p_{-\alpha (z +1)}(t)$ and  recalling from \eqref{eq:functional-equation-for-W_phi} that for $\Re(z)<\mathfrak{a}_{\phi_{\alpha}}$, $W_{\phi_{\alpha}}(1-z) = \phi_{\alpha}(-z) W_{\phi_{\alpha}}(-z)$, and the recurrence relation of the gamma function, $\Gamma(z+1) = z\Gamma(z)$, $z \in \mathbb{C}$, we obtain
 %\textcolor{red}{by ~\eqref{eq:eta_hat_bound}, Lebesgue's Dominated Convergence Theorem and Fubini-Tonelli's Theorem - adding the details}, that
\begin{eqnarray}
\opbPhi F_{q}(t) &=& \frac{1}{2\pi i} \int_{a-i \infty}^{a+ i \infty}\opbPhi p_{-\alpha z}(t)(-q)^{-z}\Ehat^*(z)dz \\
&=& \frac{1}{2\pi i} \int_{a-i \infty}^{a+ i \infty} \frac{z}{z+1}\phi_{\alpha}(-z-1) p_{-\alpha (z +1)}(t) (-q)^{-z}\Ehat^*(z)dz \nonumber \\
&=& - \frac{q}{2\pi i} \int_{a-i \infty}^{a+ i \infty} \frac{z}{z+1}\phi_{\alpha}(-1-z) (-qt^{\alpha})^{-(z+1)} \frac{1}{ \phi_{\alpha}'(0^+)} \frac{\Gamma(z)\Gamma(-z)}{W_{\phi_{\alpha}}(-z)}dz \nonumber \\
%&=& q \frac{1}{2\pi i} \int_{a+1-i \infty}^{a+1+ i \infty}\frac{z-1}{z}\phi_{\alpha}(-z) (qt^{\alpha})^{-z} \frac{1}{ \phi_{\alpha}'(0^+)} \frac{\Gamma(z-1)\Gamma(1-z)}{W_{\phi_{\alpha}}(1-z)}dz\\
&=& \frac{q}{2\pi i} \int_{a-i \infty}^{a+ i \infty} (-qt^{\alpha})^{-(z+1)} \frac{1}{ \phi_{\alpha}'(0^+)} \frac{\Gamma(z+1)\Gamma(-z-1)}{W_{\phi_{\alpha}}(-z-1)}dz \nonumber \\
&=&  \frac{q}{2\pi i} \int_{a+1-i \infty}^{a+1+ i \infty} (-qt^{\alpha})^{-z} \frac{1}{ \phi_{\alpha}'(0^+)} \frac{\Gamma(z)\Gamma(-z)}{W_{\phi_{\alpha}}(-z)}dz \nonumber \\
&=&  \frac{q}{2\pi i} \int_{a+1-i \infty}^{a+1+ i \infty} (-qt^{\alpha})^{-z}\Ehat^*(z)dz = q F_{q}(t) \label{eq:-q}
%&=& -q \frac{1}{2\pi i} \int_{a-i \infty}^{a+ i \infty} p_{-\alpha z}(t)q^{-z}\hat{f}(z)dz = -q\ g_q(t),
\end{eqnarray}
where the justification of the last identity is given as follows. First, the mapping $ z\mapsto F(z) = p_{-\alpha z}(t)(-q)^{-z}\Ehat^*(z)$ is analytical in the strip $\Re(z) \in (a,a+1)$, and for some $b > 0$, we have
\begin{equation}\label{eq:contour_int}
\int_{a-bi}^{a+1-bi} F(z)dz + \int_{a+1-bi}^{a+1+bi} F(z)dz + \int_{a+1+bi}^{a+bi} F(z)dz + \int_{a+bi}^{a-bi} F(z)dz = 0.
\end{equation}
Now, to estimate the third integral, a change of variable yields
\begin{eqnarray*}
\int_{a+1+bi}^{a+bi} F(z)dz =\int_{a+1+bi}^{a+bi} p_{-\alpha z}(t)q^{-z}\Ehat^*(z)dz  &=& - \int_a^{a+1} t^{-\alpha (y + bi)} q^{-(y+bi)} \Ehat^*(y+bi)dy.
\end{eqnarray*}
Thus, using ~\eqref{eq:f-hat-stirling}, we obtain
\begin{eqnarray*}
\left| \int_{a+1+bi}^{a+bi} F(z)dz \right| \le t^{-\alpha a} q^{-a} C_a b^{a-\frac{1}{2}} e^{-b \frac{\pi}{2}}
\end{eqnarray*}
and therefore $ \int_{a+1+i \infty}^{a+i \infty} F(z)dz = 0$. Similarly, one can show that $\int_{a-i \infty}^{a+1-i \infty} F(z)dz = 0$. Hence, we deduce from \eqref{eq:contour_int} that
\begin{equation*}
\int_{a+1- i \infty}^{a+1 + i \infty} F(z)dz = \int_{a- i \infty}^{a + i \infty} F(z)dz
\end{equation*}
which completes the proof of the identity \eqref{eq:d_qE}. Finally, the additional condition of the second part of item~\eqref{item4}, that is $r \mapsto m_{\alpha}(r)$ is a non-decreasing function on $(0,1)$, yields that  the mapping $y \mapsto m_{\alpha}(e^{-y})$ defined on $\R_+$ is the tail of a L\'evy measure of a subordinator. Thus, it follows from \cite[Proposition 2.1]{patie2012extended} that  $\bPhi $ is a Bernstein function. Furthermore,  easy algebra yields that $\pmb{\phi}'(0^+)=-\frac{\phi(-\alpha)}{\alpha}$ which is finite if and only if $\mathfrak{a}_{\phi}<-\alpha$, and this concludes the proof of item \eqref{item4}. %Now, since we have from ~\eqref{eq:-q} that $\op \eta(-qt^{\alpha}) = q\eta(-qt^{\alpha})$, recalling that $\mathcal{E}_{\phi_{\alpha}}(q)=\eta(-q)$, we conclude that
%\begin{equation*}
%\op d_{q^{\frac{1}{\alpha}}} \mathcal{E}_{\phi_{\alpha}}\circ p_{\alpha}(t) = q d_{q^{\frac{1}{\alpha}}} \mathcal{E}_{\phi_{\alpha}}\circ p_{\alpha}(t).
%\end{equation*}

%Finally, it follows from ~\cite[Proposition 2.1]{patie2012extended} that under the conditions of the second part item~\eqref{item4},  \textcolor{red}{Be careful} $m_{\alpha}$ is the tail of a L\'evy measure, and therefore $\bPhi $ is a Bernstein function. Furthermore, since \textcolor{red}{Be careful} $0<\pmb{\phi}'(0^+)=\frac{\phi(-\alpha)}{-\alpha}<+\infty$, and thus \textcolor{red}{Be careful} $\pmb{\phi} \in \B$ which concludes the proof of item~\eqref{item4}.
Finally, to prove item~\eqref{item3}, making a change of variables and performing an integration by parts in~\eqref{eq:boldA}, we have
%\begin{eqnarray*}
%t^{\alpha} \opm f(t) &=& \int_0^t f'(y)m \left(\frac{y}{t} \right)dy \\
%&=& \int_0^1 m (y)d (f(ty)-f(t)) \\
%&=&  m (y) (f(ty)-f(t))\Big|_0^1 - \int_0^1 (f(ty)-f(t)) dm (y) \\
%&=& \int_0^1 (f(t)-f(ty)) m(dy),
%\end{eqnarray*}
%and hence
%\begin{equation*}
%\op f(t) = t^{-\alpha} \left(b t f'(t) + \int_0^{1} (f(t)- f(ty)) m(dy) \right).
%\end{equation*}
%Next, performing an integration by parts and making a change of variable in ~\eqref{eq:boldA}, we obtain
\begin{eqnarray*}
\mathbf{A}f(t) &=& t^{-\alpha}\left(btf'(t)-\int_0^{\infty} (f(te^y)-f(t)) d m(e^{-y})\right) \\
&=&  t^{-\alpha}\left(btf'(t) + \int_0^{\infty} te^{y} f'(te^{y})m(e^{-y})dy \right) \\
&=&  t^{-\alpha}\left(btf'(t) + \int_t^{\infty} f'(r)m \left( \frac{t}{r} \right) dr \right).
\end{eqnarray*}
Then, recalling that $\Lambda f = f\circ \iota$ with $\iota(y)=\frac{1}{y}$, and making another  change of variable, we obtain that
\begin{eqnarray*}
\mathbf{A}\Lambda f(t) &=& t^{-\alpha}\left(bt\frac{-1}{t^2}f'\left(\frac{1}{t} \right) + \int_t^{\infty} \frac{-1}{r^2}f'\left(\frac{1}{r} \right) m \left( \frac{t}{r} \right) dr \right)\\
&=& - t^{-\alpha}\left(b\frac{1}{t}f'\left(\frac{1}{t} \right) + \int_0^{\frac{1}{t}} f'(y) m \left( \frac{y}{1/t} \right) dy \right)
%\\
%&=& -t^{-2\alpha} \Lambda \opphi f (t),
\end{eqnarray*}
and thus
\begin{eqnarray*}
  \Lambda \mathbf{A}\Lambda f(t) = - t^{\alpha}\left(bt f'\left (t\right) + \int_0^{t} f'(y) m \left( \frac{y}{t} \right) dy \right)= -t^{2\alpha}\opphi f (t)
\end{eqnarray*}
from where we conclude the proof of the intertwining relation by using the fact that $\Lambda$ is an involution.
\end{proof}
 %Finally, the characteristic operator of the non-increasing positives
\section{Self-similar Cauchy problem and stochastic representation}\label{sec:frac_Cauchy}
Let $X=(X_t)_{t \geq 0}$ be a strong Markov process defined on a filtered probability space $(\Omega, \F, (\F_t)_{t\geq 0}, \Prob )$ and taking values in  $E \subset \R^d, d \in \N$,   endowed with a sigma-algebra $\mathcal{E}$. We denote its associated semigroup by $P=(P_t)_{t \geq 0}$ which is defined, for any $t \geq 0$ and $f$ a bounded Borelian function, by
\[ P_t f(x) = \E_x[f(X_t)]\]
where $\E_x$ stands for the expectation operator with respect to $\Prob_x(X_0 = x) =1$. Since $x \mapsto \E_x$ is $\mathcal{E}$-measurable, for any Radon measure $\nu$, we use the notation
\[ \nu P_t f = \E_{\nu}[f(X_t)] = \int_E \E_x[f(X_t)]\nu(dx).\]
We say that a Radon measure $\nu$ is an \emph{invariant measure} if for all $t \geq 0$, $\nu P_t f = \nu f$. Now, since $\nu$ is non-negative on $E$, we define the weighted Hilbert space
\begin{equation*}
L^2(\nu) = \{ f :E\rightarrow \R \textrm{ measurable};\ \int_E f^2(x)\nu(dx)<\infty \}
\end{equation*}
endowed with the inner product $\langle \cdot, \cdot \rangle_{\nu}$, where $\langle f, g \rangle_{\nu} = \int_E f(x)g(x)\nu(dx)$, and norm $\Vert f  \Vert_{\nu} = \sqrt{\langle f,f \rangle_{\nu}}$.  We simply write $L^2(\R_+)$ when $\nu$ is the Lebesgue measure on $\R_+$. Then, a classical result yields that we can extend $P$ as a strongly continuous contraction Markov semigroup in $L^2(\nu)$, and when there is no confusion, we still denote this extension by $P$.
We denote by $(\bL,\calD(\bL))$ the infinitesimal generator of the semigroup $P$, i.e.
\[\calD(\bL) = \lbrace f \in L^2(\nu); \bL f = \lim_{t \rightarrow 0}\frac{P_t f - f}{t} \in L^2(\nu) \rbrace.\]
In order to provide a stochastic and explicit representation of the solution to the self-similar Cauchy problem, we shall consider two different cases, for which we recall that as bounded family of operators $P$ admits an adjoint semigroup $P^*=(P^*_t)_{t\geq0},$ which is defined, for all $t\geq0$, by $\langle P_tf, g \rangle_{\nu}=\langle f, P^*_tg \rangle_{\nu}$. We say that $P$ is normal (resp.~self-adjoint) if $P_tP^*_t=P^*_tP_t$ (resp.~$P_t=P^*_t$), and of course the second property is stronger.
\begin{assumption}\label{assump-L}
 $P$ is a normal semigroup on $L^2(\nu)$.
\end{assumption}
Note that under Assumption~\ref{assump-L}, $\bL$ is a non-negative, densely defined and normal operator on $L^2(\nu)$, and there is a unique resolution $\II$ of the identity, supported on $\sigma(\bL)$,  the spectrum of $\bL$,  where for any $\lambda \in \sigma(\bL)$, $\Re(\lambda)\geq0$,
\begin{equation}\label{eq:resolution}
\bL = \int_{\sigma(\bL)}-\lambda d\II(\lambda)
\end{equation}
with the domain~$\calD(\bL) =\lbrace f \in L^2(\nu); \int_{\sigma(\bL)}|\lambda|^2 d\II_{f,f}(\lambda)<\infty \rbrace$, see e.g.~\cite[Chapter IX]{Rudin1973}. The identity~\eqref{eq:resolution} is a shorthand notation that means
\begin{equation*}
\langle \bL f, g \rangle_{\nu} = \int_{\sigma(\bL)}-\lambda d\II_{f,g}(\lambda), \quad f\in \calD(\bL), \: g\in L^2(\nu),
\end{equation*}
where $d \II_{f,g}(\lambda)$ is a regular Borel complex measure of bounded variation concentrated on $\sigma(\bL)$, with $d |\II_{f,g}|({\sigma(\bL)}) \le \Vert f \Vert_{\nu} \Vert g \Vert_{\nu}$. Then, for $\psi$ a real measurable function defined on $\sigma(\bL)$, the operator $\psi(\bL)$ is given  by
\begin{equation*}
\psi(\bL) = \int_{\sigma(\bL)}\psi(-\lambda)d\II(\lambda) \text{ with the domain } \calD(\psi(\bL)) =\lbrace f \in L^2(\nu); \int_{\sigma(\bL)}|\psi(-\lambda)|^2 d\II_{f,f}(\lambda)<\infty \rbrace.
\end{equation*}
%with the domain
%\begin{equation*}
%\calD(\psi(\bL)) =\lbrace f \in L^2(\nu); \int_{\sigma(\bL)}|\psi(\gamma)|^2 d\II_{f,f}(\gamma)<\infty \rbrace.
%\end{equation*}
%\end{remark}
%\begin{assumption}\label{assmp1}
%Let, for any $t \geq 0$, $(\Poly_n)_{n \geq 0}$ (resp.~$(\Nu_n)_{n \geq 0}$) be a set of eigenfunctions of $P_t$ (resp.~$P_t^*$) in $L^2(\nu)$ in the sense that there exist $(\lambda_n)_{n \geq 0} \in \R_+$ such that for any $n \geq 0$ and $t \geq 0$, we have
%\begin{eqnarray}
%P_t \Poly_n &=& e^{-\lambda_n t}\Poly_n,\label{eq:eigen} \\
%P_t^* \Nu_n &=& e^{-\lambda_n t}\Nu_n. \label{eq:coeigen}
%\end{eqnarray}
%Furthermore, without loss of generality, we assume that $\lambda_n$-s are ordered in an increasing order, i.e.~ $0=\lambda_0 \le \lambda_1 \le \lambda_2 \le \cdots$
%\end{assumption}
We point out  that  spectral theoretical arguments have already been used in the context of the fractional Cauchy problems associated to self-adjoint operators, see e.g.~\cite{chen2012space},~\cite{meerschaert2009fractional},~\cite{meerschaert2009distributed},~\cite{meerschaert_toaldo}.

Next, we say that sequences $(\Poly_n)_{n \geq 0}$ and $(\Nu_n)_{n \geq 0} $ are biorthogonal in $L^2(\nu)$ if they both belong to $ L^2(\nu)$ and $\langle \Poly_m, \Nu_n \rangle_{\nu} = \mathbf{I}_{\{m=n\}}$. Moreover, a sequence that admits a biorthogonal sequence will be called minimal and a sequence that is both minimal and complete, in the sense that its linear span is dense in $L^2(\nu)$, will be called \emph{exact}. It is easy to show that a sequence $(\Poly_n)_{n \geq 0}$ is minimal if and only if none of its elements can be approximated by linear combinations of the others. Next, recall that $(\Poly_n)_{n \geq 0}$ form a Bessel sequence in $L^2(\nu)$ with bound $B>0$, if for any $f \in L^2(\nu)$,
\begin{equation}\label{eq:bes}
\sum_{n=0}^{\infty}|\langle f, \Poly_n \rangle_{\nu}|^2 \le B \Vert f \Vert^2_{\nu}.
\end{equation}
Then, the so-called synthesis operator $S:l^2(\N) \rightarrow L^2(\nu)$ defined by
$$S:\underline{c} = (c_n)_{n \geq 0} \mapsto S(\underline{c})= \sum_{n=0}^{\infty} c_n \Poly_n$$
is a bounded operator with norm $\Vert S \Vert_{\nu} \le \sqrt{B}$, i.e. the series is norm-convergent for any sequence $(c_n)_{n \geq 0}$ in $l^2(\N)$. Furthermore, when $(\Poly_n)_{n \geq 0}$ is an orthogonal system, in ~\eqref{eq:bes} we also have a lower bound and the operator $S$ is invertible.

\begin{assumption}\label{assump-P}
Assume that $P$ admits the following spectral expansion, for any $f \in \mathcal{D}$ with $\overline{\mathcal{D}}=L^2(\nu)$, and $t > T$ for some $T > 0$,
\begin{equation}\label{eq:spect_exp}
P_t f = \sum_{n=0}^{\infty}e^{-\lambda_n t} \langle f, \Nu_n \rangle_{\nu} \Poly_n \quad \text{in } L^2(\nu)
\end{equation}
where $(\lambda_n)_{n\geq 0} \in \mathbb{C}$,  with $\Re(\lambda_n)\geq 0, n \geq 0$, is the sequence of the ordered (in modulus) eigenvalues associated to the sequence of eigenfunctions $(\Poly_n)_{n \geq 0}$ which is an exact Bessel sequence in $L^2(\nu)$ with Bessel bound $B>0$, and $(\Poly_n,\Nu_n)_{n \geq 0}$ form a biorthogonal sequence in $L^2(\nu)$.
%, where $\delta_{mn}$ is the Kronecker symbol defined as
%\begin{equation*}
%\delta_{mn}=\begin{cases}
%               0, \quad \text{if } m \neq n,\\
%               1, \quad \text{if } m = n.
%            \end{cases}
%\end{equation*}
\end{assumption}
Note that when $P$ is self-adjoint, then $\Poly_n = \Nu_n$, $\forall n \in \N$, and $(\Poly_n)_{n \geq 0}$ form an orthogonal basis of $L^2(\nu)$ and ~\eqref{eq:spect_exp} is valid for all $t \geq 0$. In general, $(\Poly_n,\Nu_n)_{n \geq 0}$ do not need to form a basis.
Now, let $\zeta$ be the right-inverse of the  non-decreasing  $\alpha$-self-similar Markov process, with $0<\alpha<1$, associated via the Lamperti's mapping with $\phi$ defined by $\eqref{LKs}$.
%Denote
%\begin{align*}
%\BD = \{ \phi \in \B;\ \phi(\infty)=\infty \text{ or if } \phi(\infty)<\infty,\ \mathfrak{a}_{\phi}<-\alpha-\epsilon \text{ for some } \epsilon>0 \}.
%%\text{ and } |\phi_{\alpha}(z)|<+\infty \text{ on } \mathfrak{R}(z)>-\gamma \text{ for some } 0<\gamma<1 \}.
%\end{align*}
Recall that if $\phi \in \BD$, then Proposition~\ref{prop:L_moments} implies that for $q,t > 0$,
\begin{equation}\label{eq:Laplace-transform}
\E\left[e^{-q \zeta_t}\right]=\int_0^{\infty} e^{-qs} \Prob(\zeta_t \in ds) =\Ep(-q t^{\alpha})
\end{equation}
which either admits the series or the Mellin-Barnes integral representation provided in Proposition~\ref{prop:L_moments}.
We denote the time-changed process by $X_{\zeta}=(X_{\zeta_t})_{t \geq 0}$, and for $f \in L^2(\nu)$, define the family of linear operators $P^{\phi_{\alpha}}=(P^{\phi_{\alpha}}_t)_{t \geq 0}$ by the Bochner integral
\begin{equation}\label{eq:Bochner_integral}
P^{\phi_{\alpha}}_t f(x) =\E_x[f(X_{\zeta_t})] = \int_0^{\infty}P_s f(x) \Prob(\zeta_t \in ds).
\end{equation}
Throughout this section we assume that $\phi \in \BD $, and recall that $\bPhi(u)=\frac{u}{u-\alpha}\phi(u-\alpha)$, $u>0$, is well-defined. Then, we define the set of functions
\begin{equation*}
\DL = \left\lbrace f \in L^2(\nu);\ \left( \lambda_n \langle f, \Nu_n \rangle_{\nu} \right)_{n \geq 0} \in l^2(\N) \right\rbrace \subseteq \calD(\bL)
\end{equation*}
and
%\[\DT = \left\lbrace f \in L^2(\nu);\ \left( \Ep(-\lambda t^{\alpha})\langle f, \Nu_n \rangle_{\nu} \right)_{n \geq 0} \in l^2(\N) \text{ for any } t \geq T \right\rbrace, \]
since clearly $Span(\Poly_n) \subseteq \DL$  and by Assumption~\ref{assump-P}, ${Span}(\Poly_n) $ is dense in $ L^2(\nu)$,  we have $\DL$ is also dense in $L^2(\nu)$. We are now ready to state  the last main result of this paper.
\begin{theorem}\label{thm:Cauchy}
Let $\phi \in \BD $.
If Assumption~\ref{assump-L} (resp.~Assumption~\ref{assump-P}) holds, then for any $f \in \calD(\bL)$ (resp.~$f \in \DL$), the function $u(t,x)=P^{\phi_{\alpha}}_tf(x)$, is a strong solution in $L^2(\nu)$ to
\begin{eqnarray*}
\opbPhi u(t,x) &=& \mathbf{L} u(t,x), \quad  t>0  \: (resp.~t >T),\\
u(0,x) &=& f(x)
\end{eqnarray*}
in the following sense:
%\textcolor{blue}{we may have a strong solution that is $C^1(0,T, L2)$}
% $\sup_{t>0}\Vert u(t,\cdot) \Vert_{\nu}<\infty$, $x \mapsto  u(t,x)$ is in $\mathcal{D}(\mathbf{L})$ for each $t \geq 0$ with $\sup_{t \geq 0}\Vert \mathbf{L}u(t,\cdot) \Vert_{\nu}<\infty$,
$t \mapsto u(t,\cdot) \in C^1_0((0,\infty), L^2(\nu))$ (resp.~$ C^1_0((T,\infty), L^2(\nu))$), and both $t \mapsto u(t,\cdot)$ and $t\mapsto \mathbf{L}u(t,\cdot)$ are analytical on the half plane  $\mathfrak{R}(z)>0$ (resp.~$\mathfrak{R}(z)>T$).
%\item[(ii)] \textcolor{red}{for every $t > 0$, $\cdots$}
%\end{itemize}
Moreover, if Assumption~\ref{assump-L} holds, then for any $f\in \calD(\bL)$ and $t > 0$, $P^{\phi_{\alpha}}_t f$ admits the following spectral representation
\begin{equation}
P^{\phi_{\alpha}}_t f = \int_{\sigma(\bL)}\Ep(-\lambda t^{\alpha}) d\II(\lambda)f \quad \text{in } L^2(\nu).
\end{equation}
Otherwise if Assumption~\ref{assump-P} holds, then, for any $f \in \DL$ and $t > T$,
\begin{equation}\label{eq:P_eta}
P^{\phi_{\alpha}}_t f = \sum_{n=0}^{\infty} \Ep(-\lambda_n t^{\alpha})\langle f, \Nu_n \rangle_{\nu} \Poly_n \quad \text{in } L^2(\nu).
\end{equation}
\end{theorem}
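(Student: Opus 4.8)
The plan is to push the subordination identity \eqref{eq:Bochner_integral} through the spectral picture of $\bL$ and thereby reduce the self-similar Cauchy equation to the scalar eigenrelation $\opbPhi F_q(t)=qF_q(t)$, with $F_q(t)=\Ep(qt^{\alpha})$, already secured in Proposition~\ref{prop:operator}, item~\eqref{item4}. The whole argument rests on the identity $\E[e^{-\lambda\zeta_t}]=\Ep(-\lambda t^{\alpha})=F_{-\lambda}(t)$ of Proposition~\ref{prop:L_moments} (see \eqref{eq:Laplace-transform}), which by analytic continuation holds for $\Re(\lambda)\ge 0$, together with the elementary bound $|F_{-\lambda}(t)|=|\E[e^{-\lambda\zeta_t}]|\le\E[e^{-\Re(\lambda)\zeta_t}]\le 1$ valid for $\Re(\lambda)\ge 0$ and $t\ge 0$.

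First I would establish the two representations. Under Assumption~\ref{assump-L} I insert the resolution of the identity, in the form $P_s=\int_{\sigma(\bL)}e^{-s\lambda}\,d\II(\lambda)$, into \eqref{eq:Bochner_integral} and interchange the integrations against $\Prob(\zeta_t\in ds)$ and $d\II(\lambda)$. In weak (bilinear) form this is licensed by $\int_0^{\infty}\int_{\sigma(\bL)}|e^{-s\lambda}|\,d|\II_{f,g}|(\lambda)\,\Prob(\zeta_t\in ds)\le\Vert f\Vert_{\nu}\Vert g\Vert_{\nu}$, where I used $|e^{-s\lambda}|\le 1$ and $d|\II_{f,g}|(\sigma(\bL))\le\Vert f\Vert_{\nu}\Vert g\Vert_{\nu}$; carrying out the inner integral via \eqref{eq:Laplace-transform} yields $P^{\phi_{\alpha}}_t f=\int_{\sigma(\bL)}\Ep(-\lambda t^{\alpha})\,d\II(\lambda)f$. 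The identical computation, now with the expansion \eqref{eq:spect_exp} in place of the resolution of the identity, gives the series \eqref{eq:P_eta}; the resulting coefficient sequence $(\Ep(-\lambda_n t^{\alpha})\langle f,\Nu_n\rangle_{\nu})_{n\ge0}$ is checked to lie in $l^2(\N)$ from $|\Ep(-\lambda_n t^{\alpha})|\le 1$, the hypothesis $f\in\DL$ and the polynomial decay of $\Ep$ at $+\infty$ from Proposition~\ref{prop:L_moments}, so the (bounded) synthesis operator delivers a norm-convergent series. The initial condition is immediate since $\zeta_0=0$ gives $u(0,x)=\E_x[f(X_0)]=f(x)$ (equivalently $\Ep(0)=1$ and $\II(\sigma(\bL))=\mathrm{Id}$).

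Next I would verify the equation. For $f\in\calD(\bL)$ the bound $|F_{-\lambda}(t)|\le 1$ gives $\int_{\sigma(\bL)}|\lambda|^2|F_{-\lambda}(t)|^2\,d\II_{f,f}(\lambda)\le\int_{\sigma(\bL)}|\lambda|^2\,d\II_{f,f}(\lambda)<\infty$, so $u(t,\cdot)\in\calD(\bL)$ and, by the functional calculus, $\bL u(t,\cdot)=\int_{\sigma(\bL)}(-\lambda)F_{-\lambda}(t)\,d\II(\lambda)f$. On the other hand, applying $\opbPhi$ (which acts only on the variable $t$) under the spectral integral and invoking \eqref{eq:d_qE} with $q=-\lambda$, i.e.\ $\opbPhi F_{-\lambda}(t)=-\lambda F_{-\lambda}(t)$, produces exactly $\opbPhi u(t,\cdot)=\int_{\sigma(\bL)}(-\lambda)F_{-\lambda}(t)\,d\II(\lambda)f=\bL u(t,\cdot)$. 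Under Assumption~\ref{assump-P} the same is done termwise: $\opbPhi$ applied to \eqref{eq:P_eta} yields $\sum_{n\ge0}(-\lambda_n)\Ep(-\lambda_n t^{\alpha})\langle f,\Nu_n\rangle_{\nu}\Poly_n$, whereas closedness of $\bL$ and $\bL\Poly_n=-\lambda_n\Poly_n$ give the same series for $\bL u$. The regularity and analyticity then follow by extending $t$ to $\Re(z)>0$: the map $z\mapsto\Ep(-\lambda z^{\alpha})$ is analytic there and bounded by $1$, so differentiating the representation under the integral (resp.\ the sum) gives analyticity of $t\mapsto u(t,\cdot)$ and $t\mapsto\bL u(t,\cdot)$, while the decay of $\Ep$ secures the $C^1_0$ behaviour.

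The main obstacle is the interchange of the non-local time operator $\opbPhi f(t)=t^{1-\alpha}b f'(t)+t^{-\alpha}f'\diamond m_{\alpha}(t)$ with the spectral integral (resp.\ the infinite sum), since it couples a derivative in $t$ with a multiplicative convolution against $m_{\alpha}$ over $[0,t]$ that must be controlled uniformly in $\lambda\in\sigma(\bL)$ (resp.\ in $n$). I would treat this precisely as in the proof of Proposition~\ref{prop:operator}, item~\eqref{item4}: represent each $F_{-\lambda}$ through its Mellin--Barnes integral \eqref{eq:etaMB}, differentiate and convolve under that integral using the Stirling-type estimates \eqref{eq:f-hat-stirling}--\eqref{eq:eta_hat_bound} to dominate $\partial_y p_{-\alpha z}(y)\,m_{\alpha}(y/t)$, and then combine the two Fubini steps through the finiteness of $d|\II_{f,f}|$ and, in the Assumption~\ref{assump-P} case, the Bessel bound together with $f\in\DL$. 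A secondary point is the validity of the scalar eigenrelation for complex $q=-\lambda$: it holds on the open right half-plane $\Re(\lambda)>0$ by the sectorial analyticity in \eqref{eq:etaMB} and extends to $\Re(\lambda)=0$ by continuity, subject to the restriction $|\lambda|t^{\alpha}<\phi(\infty)$ of the Remark when $\phi(\infty)<\infty$.
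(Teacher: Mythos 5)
Your proposal is correct and follows essentially the same route as the paper: subordination of the spectral resolution (resp.\ the Bessel eigenfunction expansion) through the law of $\zeta_t$ via Fubini, followed by an application of the eigenrelation $\opbPhi F_{-\lambda}=-\lambda F_{-\lambda}$ under the spectral integral (resp.\ termwise), with the $C^1_0$ and analyticity claims read off from the properties of $\Ep$. The only divergence is cosmetic: where you propose to dominate the interchange of $\opbPhi$ with the spectral integral by rerunning the Mellin--Barnes estimates uniformly in $\lambda$, the paper instead uses the probabilistic bound $|\frac{d}{dt}\Ep(-\lambda t^{\alpha})|\le \alpha t^{\alpha-1}|\lambda|/\phi_{\alpha}'(0^+)$ together with $\int_0^1 r^{\alpha-1}m(r)\,dr<\infty$ — and your explicit remark on continuing the eigenrelation to complex $q=-\lambda$ is a point the paper leaves implicit.
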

%We split the proof of Theorem~\ref{thm:Cauchy} into several intermediary results.
%
%We are now ready to proceed to the proof of Theorem~\ref{thm:Cauchy}.
\begin{proof}
%First, recalling the spectral expansion of $P_t$ given in~\eqref{eq:spect_exp}, we have, for $f \in L^2(\nu)$ and $t \geq T$,
%\begin{equation*}
%P^{\eta}_t f = \int_{0}^{\infty} P_s f\ \Prob(\zeta_t \in ds) = \int_{0}^{\infty} \sum_{n=0}^{\infty} e^{-\lambda_n s} \langle f, \Nu_n \rangle_{\nu} \Poly_n\ \Prob(\zeta_t \in ds).
%\end{equation*}
First, note that since  $P_t$ is for all $ t \geq 0$ a contraction, using Bochner's inequality, see \cite[Theorem 1.1.4]{book_ABHN}, one can note from~\eqref{eq:Bochner_integral} that, for any $f \in L^2(\nu)$,
\begin{equation*}
\Vert P^{\phi_{\alpha}}_t f \Vert_{\nu} = \left\Vert \int_0^{\infty} P_s f \Prob(\zeta_t \in ds) \right\Vert_{\nu} \le \int_0^{\infty} \Vert P_s f \Vert_{\nu} \Prob(\zeta_t \in ds) \le \Vert f \Vert_{\nu}.
\end{equation*}
Thus, for any $t \geq 0$, $P^{\phi_{\alpha}}_t$ is a bounded operator in $L^2(\nu)$.
Now, let Assumption~\ref{assump-L} holds. Then, by the functional calculus, we have that for all $t>0$
\begin{equation*}
P_t = e^{t \bL} = \int_{\sigma(\bL)} e^{-t \lambda} d\II(\lambda).
\end{equation*}
Therefore, $\zeta$ being the right-inverse of the non-decreasing self-similar Markov process associated to $\phi \in \BD $, we have, using the identity  \eqref{eq:Bochner_integral}, that for any $f \in L^2(\nu)$ and $t>0$,
\begin{eqnarray}
P^{\phi_{\alpha}}_t f = \int_0^{\infty} P_s f\Prob(\zeta_t \in ds) &=& \int_0^{\infty}\int_{\sigma(\bL)} e^{-s \lambda} d\II(\lambda)f\Prob(\zeta_t \in ds) \nonumber \\
&=& \int_{\sigma(\bL)}\int_0^{\infty} e^{-s \lambda}\Prob(\zeta_t \in ds) d\II(\lambda)f \nonumber \\
&=& \int_{\sigma(\bL)}\Ep(-\lambda t^{\alpha}) d\II(\lambda)f \label{eq:I-int}
\end{eqnarray}
where for the transition from second to third equality, we used Fubini's theorem under the inner product $\langle \cdot, \cdot \rangle_{\nu} = \Vert \cdot \Vert_{\nu}^2$, by a simple polarization argument, which is allowed since the measure $d\II$ is of bounded variation on $\sigma(\bL)$ and, as a Laplace transform of a probability measure, for all $t,\Re(\lambda)\geq 0$, $|\Ep(-\lambda t^{\alpha})|\le 1$, and  for the last step  we used the identity \eqref{eq:Laplace-transform}. Now, as  for any $t \geq 0$,  $P_t^{\phi_{\alpha}}$ is  bounded in $L^2(\nu)$, we have $P^{\phi_{\alpha}}_t \bL \subseteq \bL P^{\phi_{\alpha}}_t$ and thus $\calD(P^{\phi_{\alpha}}_t \bL)=\calD(\bL) \subseteq \calD(\bL P^{\phi_{\alpha}}_t) = \{ f\in L^2(\nu); P^{\phi_{\alpha}}_t f \in \calD(\bL) \}$, see~\cite[Theorem 13.24, (15) and (10)]{Rudin1973}. Hence, we conclude that $P^{\phi_{\alpha}}_t$ maps $\calD(\bL)$ into itself, and since $P^{\phi_{\alpha}}_t f \in \calD(\bL)$ for all $f \in \calD(\bL)$, by the functional calculus, we obtain
\begin{equation}
\bL P^{\phi_{\alpha}}_t f = \int_{\sigma(\bL)}-\lambda \Ep(-\lambda t^{\alpha}) d\II(\lambda)f.
\end{equation}
%Next, as a Laplace transform, \textcolor{red}{Anna: 1) you have to be careful with $\lambda$ here it can be complex , 2) the Laplace transform is with respect to $\lambda$, you have to be careful and more accurate, if $\lambda$ is complex the monotonicity does not make sense, but do you need it?  3) I have defined the notation $C^{\infty}_0(\R_+)$ that you must use 4) what you claim  is not in item~\eqref{it3} of Proposition~\ref{prop:L_moments},  the claim is for $\Ep(- t)$ which is different from $\Ep(- t^{\alpha})$, be careful as the function may not vanish at $\infty$ } the mapping $t \mapsto \Ep(-\lambda t^{\alpha})$ is  non-increasing on $[0,\infty)$, and moreover, by item~\eqref{it3} of Proposition~\ref{prop:L_moments},  it is  infinitely continuously differentiable on $\R_+$ vanishing at infinity along with its derivatives.
Next, since by Proposition~\ref{prop:L_moments}\eqref{it3}, $t \mapsto \Ep(-t) \in C^{\infty}_0(\R_+)$, then, for $\Re(\lambda)>0$, the mapping $t \mapsto \Ep(-\lambda t^{\alpha}) \in C^{\infty}_0(\R_+)$ and
\begin{equation}\label{eq:der_Ep}
 \frac{d}{dt} \Ep(-\lambda t^{\alpha}) = \frac{d}{dt} \E[e^{-\lambda t^{\alpha} \zeta_1}] = -\lambda \alpha t^{\alpha-1} \E[\zeta_1 e^{-\lambda t^{\alpha} \zeta_1}],
\end{equation}
which is bounded on $t \in [t_0,\infty)$ for any $t_0>0$ and $\Re(\lambda)\geq 0$ since by item~\eqref{it1} of Proposition~\ref{prop:L_moments}, $\E[\zeta_1] = \frac{1}{\phi_{\alpha}'(0^+)}<\infty$. Furthermore, since we have, for any $t,s>0$,
\begin{equation*}
\Vert P^{\phi_{\alpha}}_t f- P^{\phi_{\alpha}}_s f\Vert_{\nu}^2 = \int_{\sigma(\bL)}(\Ep(-\lambda t^{\alpha}) - \Ep(-\lambda s^{\alpha}))^2 d\II_{f,f}(\lambda)
\end{equation*}
and
\begin{eqnarray*}
&&\left\Vert \frac{P^{\phi_{\alpha}}_t f- P^{\phi_{\alpha}}_s f}{t-s} - \int_{\sigma(\bL)} \frac{d}{dt} \Ep(-\lambda t^{\alpha}) d\II(\lambda)f \right\Vert_{\nu}^2 \\ &=& \int_{\sigma(\bL)} \left( \frac{\Ep(-\lambda t^{\alpha}) - \Ep(-\lambda s^{\alpha})}{t-s} -  \frac{d}{dt} \Ep(-\lambda t^{\alpha}) \right)^2 d\II_{f,f}(\lambda)
\end{eqnarray*}
we obtain that, in the Hilbert space topology, $t \mapsto P^{\phi_{\alpha}}_t$ is also continuously differentiable vanishing along with its derivative at $\infty$, i.e.~it is in $C^1_0((0,\infty), L^2(\nu))$. Indeed, the last identity entails that for any $t>0$,
\begin{equation}\label{eq:der_Pt_phi}
\frac{d}{dt} P^{\phi_{\alpha}}_t = \int_{\sigma(\bL)} \frac{d}{dt} \Ep(-\lambda t^{\alpha}) d\II(\lambda)
\end{equation}
where we note that for any $t>0$ and $f \in \calD(\bL)$,
\begin{eqnarray}
\left\Vert \int_{\sigma(\bL)} \frac{d}{dt} \Ep(-\lambda t^{\alpha}) d\II(\lambda) f \right\Vert_{\nu}^2 &=& \int_{\sigma(\bL)}\left(  \frac{d}{dt} \Ep(-\lambda t^{\alpha}) \right)^2 d\II_{f,f}(\lambda) \nonumber \\
& \le & \left(\frac{\alpha t^{\alpha-1}}{\phi'_{\alpha}(0^+)} \right)^2 \int_{\sigma(\bL)} |\lambda |^2 d\II_{f,f}(\lambda) < \infty \label{eq:der_Ep_norm}
\end{eqnarray}
where we used ~\eqref{eq:der_Ep}, and once again  that $|\E[\zeta_1 e^{-\lambda t^{\alpha} \zeta_1}]| \le \E[\zeta_1] = \frac{1}{\phi'_\alpha(0^+)}$ for any $t, \Re(\lambda)\geq 0$.
%\begin{eqnarray*}
%\left\Vert \int_{\sigma(\bL)} \frac{d}{dt} \Ep(-\lambda t^{\alpha}) d\II(\lambda)f \right\Vert_{\nu} &\le& \alpha t^{\alpha-1} \left\Vert \int_{\sigma(\bL)} \lambda \E[\zeta_1 e^{-\lambda t^{\alpha} \zeta_1}] \mathbbm{1}_{\{|\lambda|<1\}}  d\II(\lambda)f \right\Vert_{\nu}\\
%&+&\alpha t^{\alpha-1} \left\Vert \int_{\sigma(\bL)} \lambda \E[\zeta_1 e^{-\lambda t^{\alpha} \zeta_1}] \mathbbm{1}_{\{|\lambda| \geq 1\}}  d\II(\lambda)f \right\Vert_{\nu} \\
%&=& \alpha t^{\alpha-1} \E[\zeta_1] \left( \Vert f \Vert_{\nu} +  \int_{\sigma(\bL)} |\lambda|^2 \mathbbm{1}_{\{|\lambda| \geq 1\}}  d\II_{f,f}(\lambda) \right) <\infty.
%\end{eqnarray*}
Then, by ~\eqref{eq:der_Pt_phi} and Proposition~\ref{prop:operator}\eqref{item4}, we have that
\begin{eqnarray*}
\opbPhi P^{\phi_{\alpha}}_t f &=& t^{1-\alpha}b \frac{d}{dt} P^{\phi_{\alpha}}_t f + t^{-\alpha} \int_0^t \frac{d}{dt} P^{\phi_{\alpha}}_t f\ m\left(\frac{y}{t} \right)dy \\
&=& t^{1-\alpha}b \int_{\sigma(\bL)} \frac{d}{dt} \Ep(-\lambda t^{\alpha}) d\II(\lambda)f + t^{-\alpha} \int_0^t \int_{\sigma(\bL)} \frac{d}{dy} \Ep(-\lambda y^{\alpha}) d\II(\lambda)f\ m\left(\frac{y}{t} \right)dy \\
&=&  \int_{\sigma(\bL)} \opbPhi \Ep(-\lambda t^{\alpha}) d\II(\lambda)f = \int_{\sigma(\bL)} -\lambda \Ep(-\lambda t^{\alpha}) d\II(\lambda)f
\end{eqnarray*}
%\begin{eqnarray*}
%\opbPhi P^{\phi_{\alpha}}_t f = \opbPhi \int_{\sigma(\bL)}\Ep(-\lambda t^{\alpha}) d\II(\lambda)f = \int_{\sigma(\bL)} \opbPhi \Ep(-\lambda t^{\alpha}) d\II(\lambda)f = \int_{\sigma(\bL)} \lambda \Ep(-\lambda t^{\alpha}) d\II(\lambda)f,
%\end{eqnarray*}
where in the second step to change the order of integration, we used Fubini's theorem for Bochner integrals, see \cite[Theorem 1.1.9]{book_ABHN}, which is justified since by~\eqref{eq:der_Ep_norm} we have
\begin{eqnarray*}
 \int_0^t \left\Vert \int_{\sigma(\bL)} \frac{d}{dy} \Ep(-\lambda y^{\alpha}) d\II(\lambda)f \right\Vert_{\nu}  m\left(\frac{y}{t}\right)dy &\le & \frac{\alpha}{\phi_{\alpha}'(0^+)} \left(\int_{\sigma(\bL)} |\lambda |^2 d\II_{f,f}(\lambda) \right)^{\frac{1}{2}} \int_0^t y^{\alpha-1}m\left(\frac{y}{t}\right)dy \\
 &\le & \frac{\alpha t^{\alpha}}{\phi_{\alpha}'(0^+)} \left(\int_{\sigma(\bL)} |\lambda |^2 d\II_{f,f}(\lambda) \right)^{\frac{1}{2}} \int_0^1 r^{\alpha-1} m(r) dr <\infty
\end{eqnarray*}
since $\alpha \in (0,1)$.
%where we used that we are allowed to change the order of applying the operator $\opbPhi$ and integration. Indeed, first, the justification of changing the order of differentiation and integration follows from the fact that, by the analyticity of $\Ep$,~\eqref{eq:I-int} is uniformly convergent with respect to $t$. Next, to change the order of two integrals, recalling the definition of $\calD(\bL)$, we once again use Fubini's theorem.
Thus, $\bL P^{\phi_{\alpha}}_t f = \opbPhi P^{\phi_{\alpha}}_t f $, and taking $t=0$ in \eqref{eq:Bochner_integral}, we easily check that $u(0,x)= f(x)$, $x \in E$.
%, we complete the proof of the first part of the theorem.\\
Now, let us assume that Assumption~\ref{assump-P} holds, and define the family of linear operators $S^{\phi_{\alpha}}=(S^{\phi_{\alpha}}_t)_{t > T}$, for
%$f \in \mathcal{D}_{\phi_{\alpha}}^T = \{ f \in L^2(\nu);\ \left( \Ep(-\lambda_n t^{\alpha}) \langle f, \Nu_n \rangle_{\nu} \right)_{n \geq 0} \in l^2(\N) \text{ for any } t > T \}$
$f \in \DL$ and $t > T$, by
\begin{equation}\label{eq:S_eta}
S^{\phi_{\alpha}}_t f = \sum_{n=0}^{\infty} \int_{0}^{\infty}  {e^{-\lambda_n s}\ \Prob(\zeta_t \in ds)} \langle f, \Nu_n \rangle_{\nu} \Poly_n
= \sum_{n=0}^{\infty} \Ep(-\lambda_n t^{\alpha}) \langle f, \Nu_n \rangle_{\nu} \Poly_n.
\end{equation}
Note that $S^{\phi_{\alpha}}_t f \in L^2(\nu)$ for any $f \in \DL$. Indeed, recalling that $\Re(\lambda_n) \geq 0$, $n = 0, 1, \cdots$,
%$\lambda_0 = 0$, $0<\lambda_1 = \min_{n \geq 1} \lambda_n$ and,
as a Laplace transform of a probability measure, $|\Ep(-\lambda_n t^{\alpha})| \le 1$ for any $t \geq 0$ and $n =0,1,\cdots$, we have
\begin{eqnarray*}
\sum_{n=0}^{\infty} |\mathcal{E}_{\phi_{\alpha}}(-\lambda_n t^{\alpha})|^2 |\langle f, \Nu_n \rangle_{\nu}|^2 \le \sum_{n=0}^{\infty}  |\langle f, \Nu_n \rangle_{\nu}|^2 \le M + \sum_{n=\m}^{\infty} |\lambda_n|^2 |\langle f, \Nu_n \rangle_{\nu}|^2  <\infty
\end{eqnarray*}
where $\m = \min \{k \geq 0;\ |\lambda_k| \geq 1\}$ and in which case there exists $M\geq 0$ such that $\sum_{n=0}^{\m-1}|\langle f, \Nu_n \rangle_{\nu}|^2 \le M $.
%\begin{eqnarray*}
%\sum_{n=0}^{\infty} \mathcal{E}_{\phi_{\alpha}}^2(-\lambda_n t^{\alpha}) |\langle f, \Nu_n \rangle_{\nu}|^2 &=& \sum_{n=1}^{\infty}\mathcal{E}_{\phi_{\alpha}}^2(-\lambda_n t^{\alpha}) |\langle f, \Nu_n \rangle_{\nu}|^2 + |\langle f, \Nu_0 \rangle_{\nu}|^2 \\
%&\le &  \frac{1}{\lambda_1^2}  \sum_{n=1}^{\infty} \lambda_n^2 |\langle f, \Nu_n \rangle_{\nu}|^2 + \frac{1}{\lambda_1^2} |\langle f, \Nu_0 \rangle_{\nu}|^2  <\infty.
%\end{eqnarray*}
Moreover, by the Bessel property of $(\Poly_n)_{n \geq 0}$, we have that $S^{\phi_{\alpha}}_t$ is a  bounded operator on $\DL$ with $\Vert S^{\phi_{\alpha}}_t \Vert_{\nu} \le \sqrt{B}$.
%Note that since $\overline{Span}(\Poly_n) = L^2(\nu)$, then $\overline{\mathcal{D}^T_{\eta}}= L^2(\nu)$ as clearly $Span(\Poly_n) \subseteq \mathcal{D}_{\phi_{\alpha}}^T$. Next, since $(\Poly_n)_{n \geq 0}$ form a Bessel sequence in $L^2(\nu)$, we have that the series \eqref{eq:S_eta} is $L^2(\nu)$-convergent when the sequence
%$(c_n(t) :=\Ep(-\lambda_n t^{\alpha}) \langle f, \Nu_n \rangle_{\nu} )_{n \geq 0} \in l^2(\N)$, i.e.~ for $f \in \mathcal{D}_{\phi_{\alpha}}^T$, and moreover, $S^{\phi_{\alpha}}$ is a  bounded operator with $\Vert S^{\phi_{\alpha}}_t \Vert_{\nu} \le \sqrt{B}$.
Furthermore, since $\langle \Poly_m, \Nu_n \rangle_{\nu} = \mathbf{I}_{\{m=n\}}$, we have, for any $m \in \N$,
\begin{equation*}
S^{\phi_{\alpha}}_t \Poly_m = \sum_{n=0}^{\infty} \Ep(-\lambda_n t^{\alpha}) \langle \Poly_m, \Nu_n \rangle_{\nu} \Poly_n = \Ep(-\lambda_m t^{\alpha}) \Poly_m.
\end{equation*}
On the other hand, recalling the spectral expansion of $P_t$ given in~\eqref{eq:spect_exp}, we have, for $t > T$,
\begin{equation*}
P^{\phi_{\alpha}}_t \Poly_m = \int_{0}^{\infty} \sum_{n=0}^{\infty} e^{-\lambda_n s} \langle \Poly_m, \Nu_n \rangle_{\nu} \Poly_n\ \Prob(\zeta_t \in ds) = \int_{0}^{\infty} e^{-\lambda_m s} \Poly_m\ \Prob(\zeta_t \in ds) = \Ep(-\lambda_m t^{\alpha}) \Poly_m.
\end{equation*}
Thus, $P^{\phi_{\alpha}}_t$ and $S^{\phi_{\alpha}}_t$ coincide on ${Span}(\Poly_n)$, and since $\overline{Span}(\Poly_n)= L^2(\nu) \supseteq \DL$, the bounded linear transformation Theorem implies that $P^{\phi_{\alpha}}_t = S^{\phi_{\alpha}}_t$ on $\DL$ when $t > T$.
%Now, note that $\calD(\bL) \subseteq \mathcal{D}_{\phi_{\alpha}}^T$. Indeed, let $f \in \calD(\bL)$, then since $\lambda_0 = 0$, $0<\lambda_1 = \min_{n \geq 1} \lambda_n$ and, as a Laplace transform, $\Ep(-\lambda_n t^{\alpha}) \le 1$ for any $t \geq 0$ and $n =0,1,\cdots$, we have
%\begin{equation*}
%\sum_{n=0}^{\infty} \mathcal{E}_{\phi_{\alpha}}^2(-\lambda_n t^{\alpha}) |\langle f, \Nu_n \rangle_{\nu}|^2 = \sum_{n=1}^{\infty}\mathcal{E}_{\phi_{\alpha}}^2(-\lambda_n t^{\alpha}) |\langle f, \Nu_n \rangle_{\nu}|^2 + |\langle f, \Nu_0 \rangle_{\nu}|^2 \le  \frac{1}{\lambda_1^2}  \sum_{n=1}^{\infty} \lambda_n^2 |\langle f, \Nu_n \rangle_{\nu}|^2 + \frac{1}{\lambda_1^2} |\langle f, \Nu_0 \rangle_{\nu}|^2  <\infty,
%\end{equation*}
%and therefore, $f \in \mathcal{D}_{\phi_{\alpha}}^T$.
Next, since for all $n$, $\Poly_n $ is an  eigenfunction, $\Poly_n \in L^2(\nu)$, $P_t \Poly_n = e^{-\lambda_n t}\Poly_n$ and hence $\Poly_n \in \mathcal{D}(\mathbf{L})$ with $\mathbf{L}\Poly_n = - \lambda_n \Poly_n$.
Thus, by linearity, for any $t\geq 0$ and $N = 1,2, \cdots$, $h_t^N\in \mathcal{D}(\mathbf{L})$, where $h_t^N = \sum_{n=0}^{N} \Ep(-\lambda_n t^{\alpha}) \langle f, \Nu_n \rangle_{\nu}\Poly_n$, $f\in \calD(\bL)$, and
\begin{equation*}
\mathbf{L}h_t^N = \sum_{n=0}^{N} \Ep(-\lambda_n t^{\alpha}) \langle f, \Nu_n \rangle_{\nu}\mathbf{L} \Poly_n = \sum_{n=0}^{N} -\lambda_n \Ep(-\lambda_n t^{\alpha}) \langle f, \Nu_n \rangle_{\nu} \Poly_n.
\end{equation*}
Then, letting $N \rightarrow \infty$, we obtain
\begin{eqnarray*}
h_t^N &=& \sum_{n=0}^{N} \Ep(-\lambda_n t^{\alpha}) \langle f, \Nu_n \rangle_{\nu}\Poly_n  \rightarrow P^{\phi_{\alpha}}_tf, \quad \text{and}\\
\mathbf{L} h_t^N &=& \sum_{n=0}^{N} -\lambda_n \Ep(-\lambda_n t^{\alpha}) \langle f, \Nu_n \rangle_{\nu} \Poly_n \rightarrow \sum_{n=0}^{\infty} -\lambda_n \Ep(-\lambda_n t^{\alpha}) \langle f, \Nu_n \rangle_{\nu} \Poly_n.
\end{eqnarray*}
Observing that, since $|\Ep(-\lambda_n t^{\alpha})| \le 1$, for any $n =0,1,2,\cdots$, $t \geq 0$ and $f \in \DL \subseteq  \calD(\bL)$,
\begin{equation*}
\sum_{n=0}^{\infty} |-\lambda_n \Ep(-\lambda_n t^{\alpha}) \langle f, \Nu_n \rangle_{\nu}|^2 \le \sum_{n=0}^{\infty} \lambda_n^2 |\langle f, \Nu_n \rangle_{\nu}|^2 < \infty
\end{equation*}
and thus the Bessel property of $(\Poly_n)_{n\geq 0}$ implies that $\sum_{n=0}^{\infty} -\lambda_n \Ep(-\lambda_n t^{\alpha}) \langle f, \Nu_n \rangle_{\nu} \Poly_n \in L^2(\nu)$. Therefore, since the operator $\mathbf{L}$ is closed, we obtain that $P^{\phi_{\alpha}}_t f \in \mathcal{D}(\mathbf{L})$ and
\begin{equation}\label{eq:L_P_eta}
\mathbf{L} P^{\phi_{\alpha}}_tf = \sum_{n=0}^{\infty}  -\lambda_n \Ep(-\lambda_n t^{\alpha}) \langle f, \Nu_n \rangle_{\nu}\Poly_n.
\end{equation}
Now, similar to the justification under Assumption~\ref{assump-L} above, one can show that for any $f \in \DL$, the mapping $t\mapsto  P^{\phi_{\alpha}}_t$ is a $C^1_0((T,\infty), L^2(\nu))$ function,  and for any $t > T$, ~\eqref{eq:der_Pt_phi} holds. Then,  for any $f \in \DL$ and $t > T$, we have
\begin{eqnarray*}
\opbPhi P^{\phi_{\alpha}}_t f &=& \opbPhi \sum_{n=0}^{\infty} \Ep(-\lambda_n t^{\alpha}) \langle f, \Nu_n \rangle_{\nu} \Poly_n \\
&=& \sum_{n=0}^{\infty} \opbPhi \Ep(-\lambda_n t^{\alpha}) \langle f, \Nu_n \rangle_{\nu} \Poly_n \\
&=& \sum_{n=0}^{\infty} -\lambda_n \Ep(-\lambda_n t^{\alpha}) \langle f, \Nu_n \rangle_{\nu} \Poly_n \in L^2(\nu)
\end{eqnarray*}
where we noted that we are allowed to change the order of the operator $\opbPhi$ and summation similar to the case of the normal operator above. Indeed, to change the order of summation and integration, using the Bessel property of $(\Poly_n)_{n \geq 0}$ and recalling the definition of $\DL$, we apply Fubini's theorem.
%we have from item~\eqref{item4} of Proposition~\ref{prop:operator} that $\Ep(-\lambda_n t^{\alpha}) \in \calD(\opbPhi)$, and thus, by linearity, $h_N \in \calD(\opbPhi)$, and moreover, for $f \in \DL$ and $t>T$,
%\begin{eqnarray*}
%\opbPhi h_N = \sum_{n=0}^{N} \opbPhi \Ep(-\lambda_n t^{\alpha}) \langle f, \Nu_n \rangle_{\nu}\Poly_n &=& \sum_{n=0}^{N} -\lambda_n \Ep(-\lambda_n t^{\alpha}) \langle f, \Nu_n \rangle_{\nu} \Poly_n \\
%&\rightarrow& \sum_{n=0}^{\infty} -\lambda_n \Ep(-\lambda_n t^{\alpha}) \langle f, \Nu_n \rangle_{\nu} \Poly_n \in L^2(\nu),
%\end{eqnarray*}
%when $N \rightarrow \infty$. Next, by item~\eqref{item3} of Proposition~\ref{prop:operator} we have the $\opbPhi$ is a closed operator. Therefore, $P^{\phi_{\alpha}}_t f \in \calD(\opbPhi)$ and
%\begin{equation*}
%\opbPhi P^{\phi_{\alpha}}_t f = \sum_{n=0}^{\infty} -\lambda_n \Ep(-\lambda_n t^{\alpha}) \langle f, \Nu_n \rangle_{\nu} \Poly_n.
%\end{equation*}
Thus, we conclude that for $f \in \DL$ and $t > T$, $\opbPhi P^{\phi_{\alpha}}_t f = \mathbf{L}P^{\phi_{\alpha}}_t f$.
\begin{comment}
Now, recalling the definition of our operator $\op$, we write it as
\begin{equation*}
\opphi f = t^{1-\alpha}\ b\ \partial_t f + \opPhi f,
\end{equation*}
where we used the notation $\partial_t f = \frac{\partial}{\partial t}f$.
Therefore, one the one hand, using Lemma~\ref{lem:op_A}, we have
\begin{equation}\label{eq:A_P_eta}
\sum_{n=0}^{\infty} \opphi \eta_t(\lambda_n) \langle f, \Nu_n \rangle_{\nu}\Poly_n =  \sum_{n=0}^{\infty} -\lambda_n \eta_t(\lambda_n) \langle f, \Nu_n \rangle_{\nu}\Poly_n = \mathbf{L}P^{\phi_{\alpha}}_t f \in L^2(\nu).
\end{equation}
In particular, one can deduce from here that
\begin{equation*}
t^{1-\alpha}\ b\ \partial_t \eta_t(\lambda_n) + \opPhi \eta_t(\lambda_n) = - \lambda_n \eta_t(\lambda_n).
\end{equation*}
On the other hand,
\begin{equation}\label{eq:A_P_eta}
\opphi P_t^{\eta}f = \opphi \sum_{n=0}^{\infty} \eta_t(\lambda_n) \langle f, \Nu_n \rangle_{\nu}\Poly_n =t^{1-\alpha}\ b\ \partial_t \sum_{n=0}^{\infty} \eta_t(\lambda_n) \langle f, \Nu_n \rangle_{\nu}\Poly_n + \op \sum_{n=0}^{\infty} \eta_t(\lambda_n) \langle f, \Nu_n \rangle_{\nu}\Poly_n,
\end{equation}
\textcolor{red}{continue here to justify term-by-term differentiation and changing the order of integration and summation; I will have to think a bit more on this part, will come back soon}. Thus, combining ~\eqref{eq:L_P_eta} and ~\eqref{eq:A_P_eta}, we conclude that for $f \in \mathcal{D}_{\phi_{\alpha}}^T$,
\begin{equation*}
\opphi P_t^{\eta}f = \mathbf{L} P_t^{\eta}f.
\end{equation*}
\end{comment}
Moreover, taking $t = 0$ in \eqref{eq:Bochner_integral}, one can easily check that $u(0,x) = f(x)$ for $x \in E$.
Finally, under Assumption~\ref{assump-L} (resp.~Assumption~\ref{assump-P}), given the eigenvalues expansion of $P^{\phi_{\alpha}}_t$, we have that $t \mapsto u(t,\cdot)=P^{\phi_{\alpha}}_t f$ and $t \mapsto \mathbf{L}u(t,\cdot)=P^{\phi_{\alpha}}_t\mathbf{L}f$ are analytical on the half plane $\mathfrak{R}(z)>0$ (resp.~$\mathfrak{R}(z)>T$), and this concludes the proof.
%Furthermore, using Jensen inequality and since for any $t\geq 0$, $P_t$ is  a contraction,  one gets  from~\eqref{eq:Bochner_integral}, that for any $f \in L^2(\nu)$,
%\begin{equation*}
%\Vert P^{\phi_{\alpha}}_t f \Vert_{\nu} = \left\Vert \int_0^{\infty} P_s f \Prob(\zeta_t \in ds) \right\Vert_{\nu} \le \int_0^{\infty} \Vert P_s f \Vert_{\nu} \Prob(\zeta_t \in ds) \le \Vert f \Vert_{\nu}.
%\end{equation*}
%Thus, for any $t \geq 0$, $P^{\phi_{\alpha}}_t$ is a uniformly bounded operator in $L^2(\nu)$. Therefore, we have that $\sup_{t>0}\Vert u(t,\cdot) \Vert_{\nu}<\infty$, and similarly, $\sup_{t > 0}\Vert \mathbf{L}u(t,\cdot) \Vert_{\nu} = \sup_{t > 0}\Vert P^{\phi_{\alpha}}_t\mathbf{L}f \Vert_{\nu}<\infty$, and this concludes the proof.
\end{proof}

\section{Examples}\label{sec:Examples}
Let $\zeta = (\zeta_t)_{t \geq 0}$ be the inverse of the non-decreasing $\alpha$-self-similar Markov process $\chi = (\chi_t)_{t \geq 0}$ defined in Section~\ref{sec:self-sim}, and associated via the Lamperti mapping to the subordinator with a Laplace exponent $\phi \in \BD $, defined by \eqref{LKs}. Furthermore, recall that $\bPhi$ is defined by \eqref{eq:bPhi}. In this section, we consider some examples that illustrate the variety of applications of our main results and they cover the both situations when  Assumption~\ref{assump-L} or Assumption~\ref{assump-P} holds. Namely, section~\ref{sec:diffusions} and~\ref{sec:non-diffusions} include examples of self-adjoint, and non-self-adjoint and non-local semigroups respectively.

\subsection{Some self-adjoint examples}\label{sec:diffusions}
\subsubsection{Squared Bessel semigroups}
We consider first the case where $P=(P_t)_{t\geq 0}$ is the semigroup of the squared Bessel process of order $2$, that is  its infinitesimal generator is given, for a smooth function $f$, by
\begin{equation}\label{eq:L_Q}
\bL f(x)=2 x f''(x)+2 f'(x), \quad x>0.
\end{equation}
It is well known that $P_t$ is a strongly continuous contraction semigroup and self-adjoint  in $L^2(\R_+)$.
%where $ f \in \calD(\bL)=\{f\in C_0(\R_+); \:\bL f \in C_0(\R_+), f^{+}(0)=0 \}$ with $f^{+}(x)=\lim_{h\downarrow 0} \frac{f(x+h)-f(x)}{h}$.
Next, we define the function $J$, for $z\in\mathbb{C}$, by
\begin{equation*}
J(z)= \sum_{n=0}^{\infty}\frac{(e^{i\pi}z)^n}{(n!)^2}
\end{equation*}
and observe that $J\left(\frac{z^2}{4}\right)=\mathrm{J}_0(z)$, where $\mathrm{J}_0$ is the Bessel function of the first kind of order $0$. We also recall that $H$ the Hankel transform associated to $J$ is  an involution of $L^2(\R_+)$, i.e. $HH$ is the identity, defined  by
\begin{equation*}
H f(x)=\int_0^{\infty} J(\lambda x)f(\lambda )d\lambda.
\end{equation*}
Then, $P$ admits the following spectral expansion, for any $t>0$ and $f \in L^2(\R_+)$,
%\begin{equation}
%Q_t f(x)=\int_0^{\infty}e^{-q^{\alpha}t}H_{\alpha}f(q) \J(qx)dq.
%\end{equation}
\begin{eqnarray*}
P_t f =  H \e_{t} H f,
\end{eqnarray*}
where we set $\e_{t}(x)=e^{-tx}$, see e.g.~\cite{reflected_stable}. Then, since Assumption~\ref{assump-L} is satisfied, Theorem~\ref{thm:Cauchy} implies that, for any $f \in \calD(\bL)$, $P^{\phi_{\alpha}}_t f$ solves the self-similar Cauchy problem,
\begin{eqnarray*}
\opbPhi u(t,x) &=& \bL u(t,x), \quad  t >0,\\
u(0,x) &=& f(x).
\end{eqnarray*}
Furthermore, the solution has the following spectral representation, for all $t>0$,
\begin{equation}
P^{\phi_{\alpha}}_t f(x) = \int_0^{\infty} \Ep(-\lambda t^{\alpha}) H f(\lambda) J(\lambda x)d\lambda \quad \text{in } L^2(\R_+).
\end{equation}

\subsubsection{The classical Laguerre semigroup}
Let $P=(P_t)_{t \geq 0}$ be the classical Laguerre semigroup of order $0$, i.e.~its infinitesimal generator takes the form, for a smooth function $f$,
\begin{equation*}
\bL f(x) = x f''(x) + (1-x) f'(x), \quad x>0,
\end{equation*}
see e.g.~\cite[Section 3.1]{patie2015spectral}. Then, the semigroup $P$ is a self-adjoint and strongly continuous contraction semigroup on the weighted Hilbert space $L^2(\nu)$ with $\nu(dx)=e^{-x}dx$, $x>0$, which is the  unique invariant measure.  Moreover, it admits the eigenvalues expansions, valid for any $t>0$,
\begin{equation*}
P_t f = \sum_{n=0}^{\infty}e^{-nt} \langle f, \calL_n \rangle_{\nu}\calL_n  \quad \text{in } L^2(\nu)
\end{equation*}
where for any $n \geq 0$, $\calL_n$ is the Laguerre polynomial of order $0$, defined through the polynomial representation
\begin{equation*}
\calL_n (x) = \sum_{k=0}^n (-1)^k \binom{n}{k}\frac{x^k}{k!}.
\end{equation*}
Since $P$ is self-adjoint in $L^2(\nu)$,  Assumption~\ref{assump-L} is satisfied with $\sigma(\bL)=\{\lambda_n = n, n \geq 0\}$, and it follows from Theorem~\ref{thm:Cauchy} that for any $f \in \calD(\bL)$, $P^{\phi_{\alpha}}_t f$ solves the self-similar Cauchy problem,
\begin{eqnarray*}
\opbPhi u(t,x) &=& \bL u(t,x), \quad  t >0,\\
u(0,x) &=& f(x).
\end{eqnarray*}
Furthermore, the solution has the following spectral representation, for all $t>0$,
\begin{equation*}
P^{\phi_{\alpha}}_t f = \sum_{n=0}^{\infty}\Ep(-nt^{\alpha}) \langle f, \calL_n \rangle_{\nu}\calL_n \quad \text{in } L^2(\nu).
\end{equation*}

\subsubsection{Classical Jacobi semigroups}
Now, assume $\lam>\mu>0$ and let us consider the classical Jacobi semigroup $P=(P_t)_{t \geq 0}$ on $E=(0,1)$, which is a Feller semigroup and its infinitesimal generator $\bL_{\mu}$ has, for any $f \in C^2(E)$, the following form
\begin{equation*}
\bL_{\mu} f(x) = x(1-x) f''(x) - (\lam x - \mu) f'(x),\quad x \in (0,1),
\end{equation*}
see e.g.~\cite[Section 5]{patie2018jacobi}. The classical Jacobi semigroup $P$ admits a unique invariant measure $\beta_{\mu}$, which is the distribution of a beta random variable of parameters $\mu>0$ and $\lam-\mu>0$, i.e.
\begin{equation}\label{eq:beta-dist}
\beta_{\mu}(dy) =\beta_{\mu}(y)dy = \frac{\Gamma(\lam)}{\Gamma(\mu)\Gamma(\lam-\mu)}y^{\mu-1}(1-y)^{\lam-\mu-1}dy, \quad y \in (0,1).
\end{equation}
 Moreover, $P$ extends to a strongly continuous contraction semigroup on $L^2(\beta_{\mu})$ which we still denote by $P$. The eigenfunctions of $P$ are the Jacobi polynomials which form an orthonormal basis in $L^2(\beta_{\mu})$ and are given, for any $n \in \N$ and $x \in E$, by
\begin{equation}\label{J_pol}
\Poly_n^{\lam, \mu} (x) =  \sqrt{C_n(\mu)} \sum_{k=0}^n \frac{(-1)^{n+k}}{(n-k)!}  \frac{(\lam-1)_{n+k}}{(\lam-1)_{n\phantom{+k}}} \frac{(\mu)_n}{(\mu)_k} \frac{x^k}{k!},
\end{equation}
where we have set
\begin{equation}\label{eq:Cn-lam-mo}
C_n(\mu)=(2n+\lam-1)\frac{n!(\lam)_{n-1}}{(\mu)_n(\lam-\mu)_n}.
\end{equation}
Next, the eigenvalue associated to the eigenfunction $\Poly_n$ is, for $n \in \N$,
\begin{equation}\label{Q_eigenv}
\lambda_n =  n^2+(\lam-1)n= n(n-1)+\lam n.
\end{equation}
The semigroup $P$ then admits the spectral decomposition given, for any $f\in L^2(\beta_{\mu})$ and $t \geq 0$, by
\begin{equation}\label{Q_sem}
P_t f = \sum_{n=0}^{\infty} e^{- \lambda_n t} \langle f,\Poly_n^{\lam, \mu} \rangle_{\beta_{\mu}} \Poly_n^{\lam, \mu}.
\end{equation}
%The $L^2(\beta_{\mu})$-domain of $\bL_\mu$ can then be identified as
%\begin{equation*}
%\calD(\bL_{\mu}) = \left\lbrace f \in L^2(\beta_{\mu}); \ \sum_{n=0}^\infty n^4 \left|\langle f, \Poly_n^{\lam, \mu} \rangle_{\beta_{\mu}}\right|^2 < \infty \right\rbrace.
%\end{equation*}
Since $P$ is self-adjoint, Assumption~\ref{assump-L} is satisfied with $\sigma(\bL)= \{\lambda_n = n(n-1)+\lam n$, $n \geq 0\}$, and it follows from Theorem~\ref{thm:Cauchy} that for any $f \in \calD(\bL_{\mu})$, $P^{\phi_{\alpha}}_t f$ solves the self-similar Cauchy problem,
\begin{eqnarray*}
\opbPhi u(t,x) &=& \bL_{\mu} u(t,x), \quad  t >0,\\
u(0,x) &=& f(x).
\end{eqnarray*}
Furthermore, the solution has the following spectral representation, for all $t>0$,
\begin{equation*}
P^{\phi_{\alpha}}_t f = \sum_{n=0}^{\infty}\Ep(-(n(n-1)+\lam n)t^{\alpha}) \langle f, \Poly_n^{\lam, \mu} \rangle_{\beta_{\mu}}\Poly_n^{\lam, \mu} \quad \text{in } L^2(\beta_{\mu}).
\end{equation*}

\subsection{Some non-self-adjoint and non-local examples}\label{sec:non-diffusions}

\subsubsection{A generalized Laguerre semigroup}
We next follow ~\cite[Section 3.2]{patie2015spectral} to present a special instance of the so-called generalized Laguerre semigroups. In particular, let $\m \geq 1$ and $P=(P_t)_{t \geq 0}$ be the non-self-adjoint semigroup whose infinitesimal generator is given, for a smooth function $f$, by
\begin{equation*}
\bL_{\m}f(x)=xf''(x)+ \left( \frac{{\m}^2-1}{\m} +1-x \right)f'(x)+\int_0^{\infty}(f(e^{-y}x)-f(x)+yxf'(x))\frac{\m e^{-\m y}}{x}dy, \:x>0.
\end{equation*}
The semigroup $P$ is ergodic with a unique invariant measure, which in this case is an absolutely continuous probability measure with a density denoted by $\nu$ and which takes the form
$$\nu(y)=\frac{(1+y)}{\m+1}\frac{y^{\m-1}e^{-y}}{\Gamma(\m)}, \quad y>0.$$
Moreover, $P_t$ admits the following spectral representation for any $f \in L^2(\nu)$ and $t>0$,
\begin{equation*}
P_t f = \sum_{n=0}^{\infty}e^{-nt} \langle f,  \Nu_n\rangle_{\nu}\Poly_n \quad \text{in } L^2(\nu).
\end{equation*}
Here, $(\Poly_n, \Nu_n)_{n \geq 0}$ form an orthogonal sequence in $L^2(\nu)$, and are expressed in terms of the Laguerre polynomials $\left( \mathcal{L}_n^{(\m)} \right)_{n \geq 0}$ as follows, for  $n\in \N$,
\begin{eqnarray*}
\Poly_n(x) &=& \sum_{k=0}^n (-1)^k {n\choose k}\frac{\Gamma(\m+2)}{\Gamma(\m+k+2)}\frac{\m+k}{\m}x^k = \mathfrak{c}_n(\m+1)\mathcal{L}_{n}^{(\m+1)}(x)-\frac{c_n(\m+1)}{\m}x\mathcal{L}_{n-1}^{(\m+2)}(x) , \label{eq:eigen_perturb}\\
\Nu_n(x)&=&\frac{1}{x+1}\mathcal{L}_{n}^{(\m-1)}(x)+\frac{x}{x+1}\mathcal{L}_{n}^{(\m)}(x).\label{eq:coeigen_perturb}
\end{eqnarray*}
Here, $\mathfrak{c}_n(\m+1)=\frac{\Gamma(n+1)\Gamma(\m+2)}{\Gamma(n+\m+2)}$ and we recall that $\mathcal{L}_n^{(\m)}$ is the Laguerre polynomial of order $\m$,
\begin{equation*}
\calL_n^{(\m)}(x)=\sum_{k=0}^n (-1)^k \binom{n+\m}{n-k}\frac{x^k}{k!}, \quad x>0.
\end{equation*}
Therefore, since Assumption~\ref{assump-P} is satisfied with $\sigma(\bL)=\{\lambda_n=n$, $n \geq 0\}$, Theorem~\ref{thm:Cauchy} implies that $f \in \calD(\bL_{\m})$, $P^{\phi_{\alpha}}_t f$ solves the self-similar Cauchy problem,
\begin{eqnarray*}
\opbPhi u(t,x) &=&\bL_{\m} u(t,x), \quad  t >0,\\
u(0,x) &=& f(x).
\end{eqnarray*}
Furthermore, the solution has the following spectral representation, for all $t>0$,
\begin{equation*}
P^{\phi_{\alpha}}_t f = \sum_{n=0}^{\infty} \Ep(-nt^{\alpha}) \langle f, \Poly_n \rangle_{\nu}\Nu_n \quad \text{in } L^2(\nu).
\end{equation*}

\subsubsection{Generalized Jacobi semigroups}
In this section, following Patie et~al.~\cite{patie2018jacobi}, we provide a short description of a special instance of generalized Jacobi semigroups. In particular, let $\lam > \m> 2$ with $\lam - \m \notin \N$, and $P=(P_t)_{t \geq 0}$ be the non-self-adjoint semigroup associated with the infinitesimal generator given for a smooth function $f$
\begin{eqnarray*}
\bL_{\m} f(x) &=& x(1-x)f''(x) - (\lam x - \m-1)f'(x)- x^{-(\m+1)}\int_0^1 f'(r)r^{\m}dr, \quad x\in E.
\end{eqnarray*}
Then, we have by ~\cite[Proposition 4.1]{patie2018jacobi} that the density of the unique invariant measure of the Markov semigroup $P$ is given by
\begin{equation*}
\beta(y) = \frac{((\lam - \m - 2)y+1)}{(\m+1)(1-y)} \beta_{\m}(y),  \quad y \in (0,1),
\end{equation*}
where $\beta_{\m}$ is the distribution of the beta random variable of  parameters $\m>0$ and $\lam-\m>0$, see~\eqref{eq:beta-dist}. Furthermore, for any $t>0$ and $f\in L^2(\beta)$, $P_t$ admits the following spectral representation
\begin{equation*}
P_t f = \sum_{n=0}^{\infty} e^{-\lambda_n t} \langle f, \Poly_n \rangle_{\beta} \Nu_n \quad \text{in } L^2(\beta)
\end{equation*}
where we recall that $(\lambda_n)_{n \geq 0}$ are defined by~\eqref{Q_eigenv}, and $(\Poly_n,\Nu_n)_{n \geq 0}$ form a biorthogonal sequence in $L^2(\beta)$ and are defined as follows. We have that $\Poly_0 \equiv 1$ and, for $n \geq 1$,
\begin{equation*}
\Poly_n(x)= \frac{n!}{(\m+2)_n} \sqrt{C_n(1)} \left( \frac{\Poly_n^{(\lam,\m+2 )} (x)}{\sqrt{C_n(\m+2)}}  + \frac{x}{\m} \frac{\Poly_{n-1}^{(\lam+1,\m+3)}(x)}{\sqrt{\widetilde{C}_{n-1}(\m+3)}} \right), \ x \in E.
% n \sqrt{\C_n(\lam)}\ B(n,\mm+2) \left( \frac{\P_n^{(\lam, \mm+2 )} (x)}{\C_n^{1/2}(\lam,\mm+2)}  + \frac{x\ \P_{n-1}^{(\lam+1,\mm+3)}(x)}{\mm \ C_{n-1}^{1/2}(\lam+1,\mm+3)} \right), \ x \in E,
\end{equation*}
making explicit the dependence on the two parameters for the classical Jacobi polynomials~\eqref{J_pol}, and where $\widetilde{C}_{n}(\m+3) = n!(2n+\lam)(\lam+1)_n / (\m+3)_n(\lam-\m-2)_n$ and $C_n$-s are defined by~\eqref{eq:Cn-lam-mo}. For any $n \in \N$ the function $\Nu_n$ is given by
\begin{equation*}
\Nu_n(x) =\frac{1}{\beta(x)} C_{\lam, \m, n} \frac{\sin(\pi(\m-\lam))}{\pi} \sum_{k = 0}^{\infty} \frac{(\m+1)_{k+n}}{(\m+1)_{k\phantom{+n}}} \frac{\Gamma(k+\m-n-\lam+1)}{k!}(k-1)  x^{k+\m}, \ x \in E^o,
\end{equation*}
%\begin{equation*}
%\Nu_n(x) = \frac{w_n(x)}{\beta(x)}, \ x \in E^o,
%\end{equation*}
%where
%\begin{align*}
%w_n(x) = C_{\lam, \m, n} \frac{\sin(\pi(\m-\lam))}{\pi} \sum_{k = 0}^{\infty} \frac{(\m+1)_{k+n}}{(\m+1)_{k\phantom{+n}}} \frac{\Gamma(k+\m-n-\lam+1)}{k!}(k-1)  x^{k+\m},  \quad |x|<1,
%\end{align*}
where $C_{\lam, \m, n} = \m(\lam - 1) \Gamma(\lam+n-1) \sqrt{C_n(1)} (-2)^n / (n!\Gamma(\m+2))$. Hence, since Assumption~\ref{assump-P} is satisfied, Theorem~\ref{thm:Cauchy} implies that for $f \in \calD(\bL_{\m})$, $P^{\phi_{\alpha}}_t f$ solves the self-similar Cauchy problem,
\begin{eqnarray*}
\opbPhi u(t,x) &=&\bL_{\m} u(t,x), \quad  t >0,\\
u(0,x) &=& f(x).
\end{eqnarray*}
Lastly, the solution has the following spectral representation, for all $t>0$,
\begin{equation*}
P^{\phi_{\alpha}}_t f = \sum_{n=0}^{\infty} \Ep(-(n(n-1)+\lam n)t^{\alpha}) \langle f,  \Nu_n\rangle_{\beta}\Poly_n \quad \text{in } L^2(\beta).
\end{equation*}

\bibliographystyle{plain}

\end{document}